\documentclass[11pt]{amsart}
\usepackage{indentfirst,latexsym,bm,color}
\usepackage{amsmath,amssymb}
\usepackage{amsthm}
\usepackage{fancyhdr}
\usepackage{graphics}
\usepackage{indentfirst,latexsym,bm,amsthm,amssymb,graphicx}
\usepackage{colortbl}
\usepackage{times}
\usepackage{amsfonts}

\newtheorem{theorem}{Theorem}[section]
\newtheorem{lemm}[theorem]{Lemma}
\newtheorem{prop}[theorem]{Proposition}

\theoremstyle{definition}

\newtheorem{remark}[theorem]{Remark}

\renewcommand{\thefootnote}

\begin{document}

\title[Kostant's generating functions and McKay-Slodowy correspondence]
{Kostant's generating functions and McKay-Slodowy correspondence}

\author[Jing]{Naihuan Jing}
\address{Department of Mathematics, North Carolina State University,
   Raleigh, NC 27695, USA}
\email{jing@ncsu.edu}

\author[Li]{Zhijun Li}
\address{Department of Mathematics, Huzhou University, Huzhou, Zhejiang 313000, China}
\email{zhijun1010@163.com}

\author[Wang]{Danxia Wang}
\address{Department of Mathematics, Huzhou University, Huzhou, Zhejiang 313000, China}
\email{dxwangmath@126.com}

\subjclass[2010]{14E16, 17B67, 20C05}
\keywords{McKay-Slodowy correspondence, generating functions, Coxeter element, Poincar\'{e} series}
\begin{abstract}
Let $N\unlhd G$ be a pair of finite subgroups of $\mathrm{SL}_2(\mathbb{C})$ and
$V$ a finite-dimensional fundamental $G$-module. We study Kostant's generating functions for
the decomposition of the $\mathrm{SL}_2(\mathbb C)$-module $S^k(V)$
restricted to $N\lhd G$ in connection with the McKay-Slodowy correspondence.
In particular, the classical Kostant formula was generalized to a uniform version of the Poincar\'{e} series for the symmetric invariants
in which the multiplicities of any individual module
in the symmetric algebra are completely determined.
\end{abstract}
\date{}
\maketitle

\footnote{Corresponding author: dxwangmath@126.com}

\section{Introduction}

Let $G$ be a finite subgroup of the special linear group $\mathrm{SL}_2(\mathbb C)$.
The McKay correspondence \cite{Mc} gives a bijective map between finite subgroups of
$\mathrm{SL}_2(\mathbb{C})$ and affine Dynkin diagrams of simply laced $ADE$ types.
It is known that this correspondence establishes a classification of resolutions of singularities of $\mathbb C^2/G$, where
$G$ is a finite subgroup of $\mathrm{SL}_2(\mathbb C)$.
Let $N$ be a normal subgroup of $G$,
Slodowy \cite{Sl} considered a more general minimal resolution of the singularity of $\mathbb C^2/ {N}$ under the action of
$G/N$. The algebraic counterpart is the so-called  McKay-Slodowy correspondence which matches all affine Dynkin diagrams
with the pairs $N\lhd G\leq \mathrm{SL}_2(\mathbb C)$. An elementary proof of the McKay-Slodowy correspondence is included in \cite{JWZ1}.

Let $V=\mathbb C^2$ be the fundamental module of $\mathrm{SL}_2(\mathbb C)$.
The set of the $k$th symmetric tensor space
$\{S^k(V)\}$ generates all irreducible finite dimensional
representations of $\mathrm{SL}_2(\mathbb C)$ under the restriction functor according to the Schur-Weyl duality.
It is clear that the symmetric algebra $S(V)=\bigoplus_{k=0}^{\infty}S^k(V)$
is an infinite dimensional $\mathrm{SL}_2(\mathbb C)$-module.
The interesting question of decomposing  the restriction
$S^k(V)|_{G}$ into simple $G$-modules has been studied
in \cite{G-SV,Kn,Kos2,Sl,Sl2}, which have shown that the answer relied upon the McKay correspondence.
In particular, the Poincar\'e series for the special node has a simple form determined by the global data of the finite group $G$
associated to the simple Lie algebra in the McKay correspondence.
Furthermore, the Poincar\'{e} series for the multiplicity of the simple $G$-module in the
symmetric algebra $S(V)$ has been investigated in various works \cite{Kos3,Sp1,Sp2,St,Su}.

Let $\tilde{\mathfrak{g}}$ be an affine Lie algebra realized by a pair of subgroups
$N\lhd G\leq \mathrm{SL}_2(\mathbb C)$  via the McKay-Slodowy correspondence (either in the restricted quiver or
induced quiver),
and $\tilde{\mathfrak{h}}$ be the corresponding  Cartan subalgebra of $\tilde{\mathfrak{g}}$ with dimension $l+1$.
Let $\tilde{\mathfrak{h}}'$ be the dual space to $\tilde{\mathfrak{h}}$ and $\{\alpha_0,\alpha_1\ldots,\alpha_l\}
\subset \tilde{\mathfrak{h}}'$ be a set of simple roots. 
It is clear that the $l$-dimension subspace $\mathfrak{h}\subset\tilde{\mathfrak{h}}$ is a Cartan subalgebra of a complex simple Lie algebra $\mathfrak{g}$, denoted the dual space by $\mathfrak{h}'$ and $\{\alpha_1,\ldots,\alpha_l\}\subset \mathfrak{h}'$ is an set of
simple roots of $\mathfrak{g}$. In terms of Dynkin diagrams, the Lie algebra $\frak g$ corresponds to the sub-diagram $\Gamma$ of $\tilde{\Gamma}$
by removing the special vertex corresponding to $\alpha_0$.

In this paper we focus on the general question of
how the $\mathrm{SL}_2(\mathbb C)$-module $S^k(V)$ decomposes by
restricting to pairs of subgroups $N\lhd G\leq \mathrm{SL}_2(\mathbb C)$ in connection
with the McKay-Slodowy correspondence. Through the McKay-Slodowy correspondence, the $\mathrm{SL}_2(\mathbb C)$-module $S^k(V)$
restricted to the pair of subgroups $N\lhd G$ will
correspond to a pair of vectors in the dual space $\tilde{\mathfrak{h}}'$ of the Cartan subalgebra $\tilde{\mathfrak{h}}$ of the
multiply laced affine Lie algebras $\tilde{\mathfrak{g}}$.
We will show that Kostant's generating functions of the $G$-invariants and
$N$-invariants for each individual module corresponding to the root vertex
in the affine Dynkin diagram can be uniformly computed by rational functions determined by the data of the
pair $N\lhd G$, generalizing Kostant's original result for the simply laced case.

The paper is organized as follows. In Section $2$, we develop
several results for the multiply laced affine Lie algebras, which generalized the simply laced types by the McKay-Slodowy correspondence.
In Section $3$, Kostant's vectors and Kostant's generating functions
are described based on the McKay-Slodowy correspondence.
Moreover, the Konstant vectors are obtained by the orbit of the affine vertex $\alpha_0$ of the affine Lie algebra
under the action of corresponding Coxeter transformation.
In addition, we derive a unified formula of Kostant's generating functions for multiply laced and simply laced affine Lie algebras.
In Section $4$, we generalize a uniform formula of the  Poincar\'{e} series for the multiplicities of the irreducible module,
restricted module or induced module in the symmetric algebra.
Namely, the Poincar\'{e} series for the symmetric invariants and the multiplicities of any individual module
in the symmetric algebra can be obtained by the classical Kostant formula.

\section{The generalizations by McKay-Slodowy correspondence}

Let $N$ be a normal subgroup of the finite group $G$ and
$\{\rho_i|i\in {\rm I}_{G}\}$ (resp. $\{\phi_i|i\in {\rm I}_{N}\}$) the set of
complex finite-dimensional irreducible modules of $G$ (resp. $N$).
Let $\{\check\rho_i|i\in \check{\mathrm{I}}\}$ be the set of inequivalent $N$-restriction modules ${\rm Res}(\rho_i)$.
Correspondingly, the set $\{\hat\phi_i|i\in \rm\hat{{I}}\}$ denotes that of inequivalent induced $G$-modules
${\rm Ind}(\phi_i)$. It is known that the set ${\rm \check{I}}$ is one-to-one correspondent with the set ${\rm \hat{I}}$ (see \cite{JWZ1}).

Let $V$ be a fixed finite-dimensional $G$-module, denote the $N$-restriction
of $V$ by $\check{V}$.
Therefore the tensor decompositions
\begin{equation*}\label{resandindtensor}
  \check{V}\bigotimes \check\rho_j=\bigoplus\limits_{i\in\check{\rm I}}b_{ji} \check\rho_{i}\qquad {\rm and}\qquad
  V\bigotimes \hat\phi_j=\bigoplus\limits_{i\in\hat{\rm I}}d_{ji} \hat\phi_i
\end{equation*}
give rise to two integral matrices ${\rm \widetilde{B}}=(b_{ij})$ and ${\rm \widetilde{D}}=(d_{ij})$ of the same size respectively.
The corresponding representation graph is the digraph
$\mathcal{R}_{\check{V}}(\check{G})$ (resp. $\mathcal{R}_{V}(\hat{N})$) with vertices indexed by ${\rm \check{I}}$ (resp. ${\rm \hat{I}}$),
where $i$ is joined to $j$ by $\mathrm{max}(b_{ij}, b_{ji})$ (resp. $\mathrm{max}(d_{ij}, d_{ji})$) edges with an arrow pointing to
$i$ if $b_{ij}>1$ (resp. $d_{ij}>1$).

Now let $G$ be a subgroup of $\mathrm{SL}_2(\mathbb C)$. It is well-known that there are five types:
(1) the cyclic group $C_{n+1}$; (2) the dihedral group $D_{2n+2}$; (3) the binary tetrahedral ${T}$ of order $24$,
(4) the binary octahedral group ${O}$ of order $48$; and (5)
the  binary icosahedral group ${I}$ of order $120$.

%
%
%

The McKay-Slodowy correspondence says that the distinguished pairs $N\lhd G\leq \mathrm{SL}_2(\mathbb C)$ and their
digraphs realize
all affine Dynkin diagrams explicitly as follows \cite{JWZ1}. The five types of the simply laced affine Dynkin diagrams $A_{n-1}^{(1)}$, $D_{n+2}^{(1)}$, $E_6^{(1)}$, $E_7^{(1)}$, $E_8^{(1)}$ correspond to the
special pairs of $N=G$ consisting of ${C}_n$, ${D}_{n}$, ${T}$, ${O}$, ${I}$ and $V\cong\mathbb{C}^2$
via $\mathcal{R}_{V}(G)$. The Cartan matrices are given by ${\rm C_{\widetilde{A}}=2I}-{\rm \widetilde{A}}$, where  ${\rm \widetilde{A}}$ is the adjacency matrix of
$\mathcal{R}_{V}(G)$.

The group pairs $ D_{n-1} \lhd  D_{2(n-1)}$,
$ C_{2n} \lhd  D_n$, $ C_{2n} \lhd  D_{2n}$, $ T \lhd  O$,
$ D_2 \lhd  T$, $ C_2 \lhd  D_2$  
 realize
the twisted 
Dynkin diagrams of types
$A_{2n-1}^{(2)}$, $D_{n+1}^{(2)}$, $A_{2n}^{(2)}$, $E_6^{(2)}$, $D_4^{(3)}$, $A_2^{(2)}$ by the graphs $\mathcal{R}_{\check{V}}(\check{G})$ respectively, while
the untwisted multiply-laced affine Dynkin diagrams of types
$B_n^{(1)}$, $C_n^{(1)}$, $C_n^{(1)}$, $F_4^{(1)}$, $G_2^{(1)}$, $A_1^{(1)}$ are realized by $\mathcal{R}_{V}({\hat{N}})$ respectively.
Moreover, their Cartan matrices are given by ${\rm C_{\widetilde{B}}=2I}-{\rm \widetilde{B}}^T$ and
${\rm C_{\widetilde{D}}}={\rm 2I}-{\rm \widetilde{D}}^T$ respectively, here
${\rm \widetilde{B}}$ and ${\rm \widetilde{D}}$ are the adjacent matrices
of $\mathcal{R}_{V}({\check{G}})$ and $\mathcal{R}_{V}({\hat{N}})$ respectively.

Let  $\psi$ be the highest positive root of a complex simple Lie algebra $\mathfrak{g}$. Let $\{ \alpha_i\}$ be the set of simple
roots of the affine Kac-Moody algebra in types $X^{(r)}$. 
Then $\alpha_0+\psi=\sum\limits_{i=0}^ld_i\alpha_i$, where $d_i$ are the Kac symbols appearing in the
corresponding affine Dynkin diagrams. 
The coefficients $d_i$ can be realized
as dimensions of the irreducible representations of subgroups of $\mathrm{SL}_2(\mathbb{C})$
via the McKay-Slodowy correspondence.


\begin{lemm}\label{dimandcoefficients}
Let $N\lhd G$ be a pair of finite subgroups in $\mathrm{SL}_2(\mathbb{C})$, and $\{\check\rho_j|j\in {\rm\check{I}}\}$ and $\{\hat\phi_j|j\in\hat{\mathrm{I}}\}$ the sets of the $N$-restriction modules and the induced $G$-modules respectively. Then
\begin{equation}\label{dims}
  {\rm dim}\check\rho_j=d_j \ \ {\rm and} \ \ {\rm dim}\hat\phi_j=|G:N|d_j,
\end{equation}
where $d_j$ are the Kac symbols of the untwisted and twisted multiply laced affine Dynkin diagrams if $N\lhd G$ is $ D_{n-1} \lhd  D_{2(n-1)}$, $ C_{2n} \lhd  D_n$,  $ T \lhd  O$, or $ D_2 \lhd  T$  respectively,
$d_j$ are the Kac symbols of the untwisted multiply laced affine Dynkin diagrams
if $N\lhd G$ is $C_2 \lhd  D_2$,
$d_j$ are the Kac symbols of the twisted multiply laced affine Dynkin diagrams
if $N\lhd G$ is $C_{2n} \lhd  D_{2n}$ $(n\geq 2)$ or $C_2 \lhd  D_2$ respectively and
the ordering of $d_j$ are reversed.
In particular, when $N=G$ is one of the subgroups of $\mathrm{SL}_2(\mathbb{C})$, and
 $\{\rho_j|j\in {I}_{G}\}$ the set of irreducible $G$-modules, then
\begin{center}
  ${\rm dim}\rho_j=d_j$
\end{center}
are the Kac symbols in the simply laced affine Dynkin diagram associated with $G$.
\end{lemm}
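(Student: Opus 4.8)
The plan is to read the dimension vectors off the defining tensor decompositions, recognize them as null vectors of the relevant affine Cartan matrices, and fix the overall scalar at the affine vertex. First I would apply $\dim(-)$ to the decompositions $\check V\otimes\check\rho_j=\bigoplus_i b_{ji}\check\rho_i$ and $V\otimes\hat\phi_j=\bigoplus_i d_{ji}\hat\phi_i$. Since $V\cong\mathbb C^2$ and restriction to $N$ preserves dimension, $\dim\check V=\dim V=2$, and taking dimensions gives
\begin{equation*}
  2\dim\check\rho_j=\sum_i b_{ji}\dim\check\rho_i \qquad\text{and}\qquad 2\dim\hat\phi_j=\sum_i d_{ji}\dim\hat\phi_i .
\end{equation*}
Setting $\mathbf d=(\dim\check\rho_i)_i$ and $\mathbf e=(\dim\hat\phi_i)_i$, these say precisely that $(2I-\widetilde B)\mathbf d=0$ and $(2I-\widetilde D)\mathbf e=0$.

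Next I would pass to the Cartan matrices $\mathrm C_{\widetilde B}=2I-\widetilde B^{T}$ and $\mathrm C_{\widetilde D}=2I-\widetilde D^{T}$. Since $2I-\widetilde B=\mathrm C_{\widetilde B}^{T}$ and $2I-\widetilde D=\mathrm C_{\widetilde D}^{T}$, the vectors $\mathbf d$ and $\mathbf e$ lie in the kernels of the transposed affine Cartan matrices. An affine generalized Cartan matrix has a one-dimensional kernel spanned by the marks $(a_i)$, so the kernel of its transpose is spanned by the comarks $(a_i^\vee)$; hence $\mathbf d$ and $\mathbf e$ are each proportional to a comark vector, equivalently to the mark vector of the Langlands-dual affine diagram. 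It is this transpose that interchanges the twisted diagram carried by $\mathcal R_{\check V}(\check G)$ with the untwisted multiply laced diagram carried by $\mathcal R_V(\hat N)$. The key normalization input is that in the standard labeling the affine-node comark is $a_0^\vee=1$ for every affine type, including the exceptional $A_{2n}^{(2)}$ where the mark $a_0=2$; this is what keeps the forthcoming scalars integral.

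To determine the proportionality constants I would evaluate at the affine vertex $\alpha_0$, which under the McKay-Slodowy dictionary is the trivial module. On the restriction side $\check\rho_0=\mathrm{Res}_N(\mathbf 1_G)$ is the trivial $N$-module, so $\dim\check\rho_0=1=a_0^\vee$ forces the constant to be $1$ and yields $\dim\check\rho_j=d_j$. On the induction side $\hat\phi_0=\mathrm{Ind}_N^G(\mathbf 1_N)\cong\mathbb C[G/N]$ has dimension $|G:N|=|G:N|\,a_0^\vee$, so the constant is $|G:N|$ and $\dim\hat\phi_j=|G:N|\,d_j$. The case $N=G$ is the degenerate specialization: here $\widetilde A$ is symmetric, $\mathrm C_{\widetilde A}=2I-\widetilde A$ needs no transpose, marks and comarks coincide, and the same null-vector-plus-normalization argument gives $\dim\rho_j=d_j$.

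The step I expect to be the main obstacle is not the null-vector computation but the diagram-by-diagram identification of these comark vectors with the explicit Kac symbols $d_j$ listed in the statement: one must verify, for each pair $N\lhd G$, which affine diagram's symbols appear and fix the node ordering correctly. This is precisely where the distinction between marks and comarks is felt, and where the self-dual-up-to-orientation families $A_{2n}^{(2)}$ and $A_2^{(2)}$ attached to $C_{2n}\lhd D_{2n}$ and $C_2\lhd D_2$ force the reversal of the ordering of $d_j$ recorded in the statement; the anomalous mark $a_0=2$ of $A_{2n}^{(2)}$ must likewise be reconciled with $\dim\check\rho_0=1$ by working throughout with the comark labeling.
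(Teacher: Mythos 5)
Your argument is correct, and there is nothing in the paper to measure it against: the lemma is stated without proof (it is imported from the McKay--Slodowy correspondence, cf.\ \cite{JWZ1}), so your null-vector derivation is a genuine supplement rather than a variant of the authors' route. Taking dimensions in the defining tensor decompositions does give $(2I-\widetilde{\mathrm{B}})\mathbf{d}=0$ and $(2I-\widetilde{\mathrm{D}})\mathbf{e}=0$, and since $2I-\widetilde{\mathrm{B}}=C_{\widetilde{\mathrm{B}}}^{T}$ these identify $\mathbf{d}$ and $\mathbf{e}$ with the one-dimensional, strictly positive left null spaces of the affine Cartan matrices, i.e.\ with comark vectors; the normalizations $\dim\mathrm{Res}_N(\mathbf{1}_G)=1$ and $\dim\mathrm{Ind}_N^G(\mathbf{1}_N)=|G:N|$ at the affine node then fix the scalars exactly as you say. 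One point you should make explicit when writing this up: since the comarks of a twisted diagram are the marks of its untwisted dual and vice versa, your computation shows that $(\dim\check\rho_j)_j$ is the mark vector of the diagram carried by $\mathcal{R}_{V}(\hat{N})$ while $(\dim\hat\phi_j/|G:N|)_j$ is the mark vector of the diagram carried by $\mathcal{R}_{\check V}(\check{G})$ --- two different label vectors in general (for $T\lhd O$ one gets $(1,2,3,4,2)$ versus $(1,2,3,2,1)$), so the single symbol $d_j$ in the statement is doing double duty and must be read node-by-node against the appropriate diagram. Your use of the comark normalization $a_0^{\vee}=1$ to handle $A_{2n}^{(2)}$ (where the mark $a_0=2$) and to explain the reversed ordering for $C_{2n}\lhd D_{2n}$ is exactly the right reconciliation; what remains is only the finite diagram-by-diagram check against Kac's tables, which you correctly flag as bookkeeping rather than a gap.
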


The coefficients of the highest root of the simply-laced affine Lie algebras satisfy a cubic polynomial equation \cite{Bur}
in the context of the McKay correspondence.
Based on Lemma \ref{dimandcoefficients} we can generalize the polynomial equation for all affine Lie algebras.

\begin{theorem}
Let $N\unlhd G$ be a pair of finite subgroups in $\mathrm{SL}_2(\mathbb{C})$, $d_i(i=0,\ldots, l)$
the Kac symbols of the untwisted and twisted multiply laced affine Dynkin diagrams, then
\begin{equation}\label{relation}
   d^2\sum_{i=0}^ld_i-3d\sum_{i=0}^ld_i^2+2\sum_{i=0}^ld_i^3=0,
\end{equation}
where $d={\rm Max}\{d_i:0\leq i\leq l\}.$
\end{theorem}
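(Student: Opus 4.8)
The plan is to read the left side of \eqref{relation} as one cubic evaluated on the marks. Expanding the product yields the factorization
\begin{equation*}
d^2\sum_{i=0}^{l}d_i-3d\sum_{i=0}^{l}d_i^2+2\sum_{i=0}^{l}d_i^3=\sum_{i=0}^{l}d_i(d-d_i)(d-2d_i)=\sum_{i=0}^{l}f(d_i),
\end{equation*}
where $f(x)=x(d-x)(d-2x)$. This is the whole point of the manipulation: $f$ vanishes at $x=0$, $x=d/2$ and $x=d$, and it is antisymmetric about its middle root, $f(d-x)=-f(x)$. Hence any mark equal to $d$ contributes nothing, any mark equal to $d/2$ (when $d$ is even) contributes nothing, and a mark $k$ with $0<k<d$ is cancelled by a mark $d-k$. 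The theorem therefore reduces to the statement that, after deleting the marks equal to $d$, the remaining multiset of marks is invariant under $k\mapsto d-k$.

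With this reduction I would organize the verification by the value of $d=\mathrm{Max}\{d_i\}$. First dispose of every affine type all of whose marks lie in $\{1,2\}$, so that $d\le 2$: there $f(1)=1\cdot(d-1)(d-2)$ and $f(2)=2\cdot(d-2)(d-4)$ both vanish, so each summand $f(d_i)$ is individually zero and \eqref{relation} is immediate. By Lemma \ref{dimandcoefficients} together with the standard tables of Kac symbols this already covers all the infinite families—$A^{(1)}$, $D^{(1)}$, $B^{(1)}$, $C^{(1)}$, $A_{2n-1}^{(2)}$, $A_{2n}^{(2)}$, $D_{n+1}^{(2)}$—as well as $A_1^{(1)}$ and $A_2^{(2)}$.

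There remain only finitely many diagrams carrying a mark $\ge 3$, namely $E_6^{(1)},E_7^{(1)},E_8^{(1)},F_4^{(1)},G_2^{(1)},E_6^{(2)}$ and $D_4^{(3)}$. For the three simply laced cases the identity is precisely the cubic of \cite{Bur} applied to the dimension vectors of the binary polyhedral groups $T,O,I$, whose dimension multisets $\{1,1,1,2,2,2,3\}$, $\{1,1,2,2,2,3,3,4\}$ and $\{1,2,2,3,3,4,4,5,6\}$ are symmetric under $k\mapsto d-k$. For the remaining four I would substitute the explicit marks supplied by Lemma \ref{dimandcoefficients}—$\{1,2,3,4,2\}$ for $F_4^{(1)}$, $\{1,2,3\}$ for $G_2^{(1)}$ and for $D_4^{(3)}$, and $\{1,2,3,2,1\}$ for $E_6^{(2)}$—where the same antisymmetry $f(d-k)=-f(k)$ collapses each check to a single pairwise cancellation.

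The step I expect to be most delicate is not any one computation but the uniformity of the argument: one must be certain that every infinite family really does have all marks at most $2$, so that the problem genuinely collapses to the seven exceptional diagrams above. The conceptually cleanest resolution would be a dimension-reversing involution on the relevant modules, carrying a module of dimension $k$ to one of dimension $d-k$ and thereby explaining the symmetry of the mark multiset without reference to the classification. Lacking such an involution, the proof rests on the explicit data, and it is here that Lemma \ref{dimandcoefficients}—by identifying each Kac symbol with a genuine module dimension—provides exactly the input needed to lift the simply laced result of \cite{Bur} to all affine Lie algebras.
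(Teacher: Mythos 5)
Your proposal is correct, and it is in fact more of a proof than the paper supplies: the paper states the theorem with no written argument, leaning on Lemma \ref{dimandcoefficients} (which identifies the Kac symbols with module dimensions) and on the citation to \cite{Bur} for the simply laced cubic, so the intended justification is an implicit case-by-case verification over the finitely many diagram types. Your contribution beyond that is the factorization $d^2\sum d_i-3d\sum d_i^2+2\sum d_i^3=\sum_i d_i(d-d_i)(d-2d_i)$ together with the antisymmetry $f(d-x)=-f(x)$ of $f(x)=x(d-x)(d-2x)$ about $x=d/2$; this is a genuine structural improvement, since it reduces the identity to the palindromic symmetry of the mark multiset (after discarding the marks equal to $d$, which is needed because $d\mapsto 0$ under the involution and $0$ is not a mark --- your later phrasing ``whose dimension multisets are symmetric under $k\mapsto d-k$'' is slightly loose on this point), and it disposes of every infinite family at once because all of their marks lie in $\{1,2\}$, where $f$ vanishes termwise. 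Two small cautions: first, the multiset you assign to $D_4^{(3)}$, namely $\{1,2,3\}$, is the comark (dual) labelling in Kac's standard tables, whose marks are $\{1,2,1\}$; under the paper's convention via Lemma \ref{dimandcoefficients} (dimensions of the restricted modules for $D_2\lhd T$) your multiset is the right one, and in any case both multisets satisfy the identity, so nothing breaks --- but it is worth flagging that the paper's ``Kac symbols'' for twisted types do not always coincide with Kac's $a_i$. Second, the dimension-reversing involution you wish for in the last paragraph is not needed for the proof to be complete; the explicit finite check you describe already closes the argument.
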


Let $h$ be the Coxeter number of a complex simple Lie algebra $\mathfrak{g}$, the next
result describes the relation
between the  Coxeter number and the dimensions of irreducible modules of
the finite groups $N\unlhd G\leq \mathrm{SL}_2(\mathbb{C})$.
\begin{prop}
Let $N\lhd G\leq \mathrm{SL}_2(\mathbb{C})$ be a distinguished pair of subgroups, and let
$\check{\rho}_i$ be the $N$-restriction module of the irreducible $G$-module $\rho_i$.
Then the Coxeter number of 
the Lie algebra $\mathfrak{g}$ is
\begin{equation*}
h=\sum_{i\in \Upsilon\cap N}{\rm dim}\check{\rho}_i,
\end{equation*}
where $\Upsilon$ is a set of representative of conjugacy classes of $G$.
\end{prop}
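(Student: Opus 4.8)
The plan is to reduce the claimed identity to the sum of the Kac symbols over the affine diagram and then to invoke the classical relation between the marks of the highest root and the Coxeter number. First I would make sense of the index set. Since $N\lhd G$, the group $G$ acts by conjugation both on the set $\mathrm{Irr}(N)$ of irreducible $N$-modules and on the set of conjugacy classes of $N$. By Brauer's permutation lemma these two actions have the same number of orbits, and the $G$-orbits on the conjugacy classes of $N$ are precisely the $G$-conjugacy classes contained in $N$ (each such class being a union of $N$-classes forming a single $G$-orbit), i.e. those indexed by $\Upsilon\cap N$. On the other hand, by Clifford theory as set up in \cite{JWZ1}, the inequivalent $N$-restriction modules $\{\check\rho_i\mid i\in\check{\mathrm I}\}$ are parametrized by the $G$-orbits on $\mathrm{Irr}(N)$. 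Hence $|\check{\mathrm I}|=|\Upsilon\cap N|$, and re-indexing the restriction modules by $\Upsilon\cap N$ gives
\begin{equation*}
\sum_{i\in\Upsilon\cap N}\dim\check\rho_i=\sum_{i\in\check{\mathrm I}}\dim\check\rho_i .
\end{equation*}

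Next I would evaluate the right-hand side. By Lemma \ref{dimandcoefficients} each $\dim\check\rho_i$ equals the corresponding Kac symbol $d_i$, so the sum becomes $\sum_{i=0}^{l}d_i$. To finish, I would exploit the defining relation $\alpha_0+\psi=\sum_{i=0}^{l}d_i\alpha_i$. Because $\psi$ is the highest root of $\mathfrak g$ and lies in the span of $\{\alpha_1,\dots,\alpha_l\}$, comparing the coefficients of $\alpha_0$ forces $d_0=1$, while for $i\geq 1$ the symbol $d_i$ is exactly the coefficient of $\alpha_i$ in $\psi$. Therefore
\begin{equation*}
\sum_{i=0}^{l}d_i=1+\mathrm{ht}(\psi)=h ,
\end{equation*}
by the classical fact that the height of the highest root is one less than the Coxeter number. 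Combining the two displays yields $h=\sum_{i\in\Upsilon\cap N}\dim\check\rho_i$; specializing to $N=G$ recovers the familiar statement that the sum of the dimensions of the irreducible $G$-modules equals $h$.

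The step I expect to be the main obstacle is the first one, namely pinning down the index set $\Upsilon\cap N$ and justifying the replacement of the sum by $\sum_{i\in\check{\mathrm I}}\dim\check\rho_i$. The delicate point is that Brauer's lemma matches only the cardinalities of the two orbit sets, so one must lean on the explicit Clifford-theoretic description from \cite{JWZ1} to see that the family $\{\check\rho_i\}$ and the family of $G$-classes contained in $N$ are genuinely parametrized by the same vertices of the affine diagram. A secondary point requiring care is that the relation $\alpha_0+\psi=\sum_{i=0}^{l}d_i\alpha_i$ underlying the last display must be verified to hold with the restriction-module dimensions in the twisted multiply-laced cases, where the naive null-root identity $\delta=\alpha_0+\psi$ of the untwisted situation is no longer available; this is precisely where Lemma \ref{dimandcoefficients}, together with its reversed-ordering conventions, does the essential work.
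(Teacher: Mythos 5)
The paper states this proposition without any proof, so there is no ``paper's own argument'' to match; your route --- re-index $\Upsilon\cap N$ against $\check{\mathrm I}$ via Brauer's permutation lemma and Clifford theory, convert dimensions to Kac symbols by Lemma \ref{dimandcoefficients}, and then sum the coefficients of $\alpha_0+\psi=\sum_{i=0}^l d_i\alpha_i$ --- is clearly the intended one, since those are exactly the two ingredients the authors place immediately before the statement. The indexing step is fine: only the cardinality $|\check{\mathrm I}|=|\Upsilon\cap N|$ is needed because you are summing dimensions, and that count is correctly justified.

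The genuine gap is in the last step, and you have located it yourself without closing it. The identity $\sum_{i=0}^l d_i=1+\mathrm{ht}(\psi)=h$ requires $d_0=1$ and $\psi$ to be the highest \emph{long} root of $\mathfrak g$, i.e.\ it requires the $d_i$ to be the labels of the \emph{untwisted} diagram (equivalently the dual labels of the twisted one). For $T\lhd O$ this works out: the restriction dimensions are $1,2,2,3,4$, the labels of $F_4^{(1)}$, summing to $12=h(F_4)$ --- had you used the labels $1,2,3,2,1$ of the restriction diagram $E_6^{(2)}$ you would have landed on the dual Coxeter number $9$ instead, so the appeal to ``the defining relation $\alpha_0+\psi=\sum d_i\alpha_i$'' is not self-justifying and must be checked label-by-label. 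More seriously, for the pairs realizing $A_{2n}^{(2)}$ (namely $C_{2n}\lhd D_{2n}$ and $C_2\lhd D_2$) the argument actually fails: the distinct restrictions have dimensions $1,2,\dots,2$ summing to $2n+1$, while the Coxeter number of the algebra $C_n$ obtained by deleting $\alpha_0$ is $2n$. There $d_0=2$ in the standard labelling (or, with the paper's reversed ordering, $\sum_{i\ge1}d_i\alpha_i=2(\alpha_1+\cdots+\alpha_n)$ is not a root of $\mathfrak g$), so ``height of the highest root equals $h-1$'' cannot be invoked, and your deduction that comparing $\alpha_0$-coefficients ``forces $d_0=1$'' is exactly the point that breaks. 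Your write-up would, as it stands, prove an equality that is false for those two families unless ``Coxeter number'' is reinterpreted as $2n+1$; the paper itself tacitly concedes this by later writing ``excluding type $A_{2n}$'' when it sets $g=h/2$. A complete proof must either exclude these pairs or state explicitly which Coxeter number is meant for type $A_{2n}^{(2)}$.
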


\section{Kostant's generating functions}

Let $N \unlhd G$ be a fixed pair of finite subgroups of $\mathrm{SL}_2(\mathbb{C})$
that corresponds to an affine Lie algebra $\tilde{\mathfrak{g}}$ of type $\widetilde{X}$ via the McKay-Slodowy correspondence
\cite{JWZ1}.
Let $\{\alpha_i\}$ be the simple roots of $\tilde{\mathfrak{g}}$.
Let $V\cong\mathbb C^2$ be the standard $\mathrm{SL}_2$-module.
The symmetric tensor algebra $S(V)=\bigoplus_{k=0}^{\infty}S^k(V)$
is an infinite dimensional $\mathrm{SL}_2(\mathbb C)$-module, which contains all
irreducible $\mathrm{SL}_2(\mathbb C)$-modules as submodules.

Let $\check{s}^{j}_{k}$ $(j\in {\rm \check{I}})$ (resp. $\hat{s}^{j}_{k}$ ($j\in {\rm \hat{I}}$)) be the multiplicity
of the $N$-restriction $\check{\rho}_j$ in the $k$th symmetric power $S^{k}(V)$ (resp. the $G$-restriction $\hat{\phi}_j$ in $S^{k}(V)$):
\begin{equation}\label{restrict}
S^{k}(V)\big|_{N}=\sum_{j\in {\rm \check{I}}}\check{s}^{j}_{k}\check{\rho}_j \ \ \
{\rm and} \ \ \ S^{k}(V)\big|_{G}=\sum_{j\in {\rm \hat{I}}}\hat{s}^{j}_{k}\hat{\phi}_j.
\end{equation}
By the McKay-Slodowy correspondence, the corresponding vectors in the dual space $\tilde{\mathfrak{h}}'$ of the Cartan subalgebra $\tilde{\mathfrak{h}}$ of
$\tilde{\mathfrak{g}}$ are defined as
\begin{equation}\label{vectors}
\check{\textbf{s}}_k =\sum_{j\in {\rm \check{I}}}\check{s}^{j}_{k}\alpha_j  \ \ \
{\rm and/or} \ \ \ \hat{\textbf{s}}_k  =\sum_{j\in {\rm \hat{I}}}\hat{s}^{j}_{k}\alpha_j
\end{equation}
according to the type of $\tilde{\mathfrak{g}}$ (see the detailed description of realization of the Dynkin diagram of $\tilde{\mathfrak{g}}$ in \cite{JWZ1}).
In addition, we define the Kostant-type generating functions by
\begin{equation}\label{powerseries}
  \check{\textbf{S}}(t)=\sum_{k=0}^{\infty}\check{\textbf{s}}_kt^k  \ \ \
{\rm and/or} \ \ \    \hat{\textbf{S}}(t)=\sum_{k=0}^{\infty}\hat{\textbf{s}}_kt^k
\end{equation}
according to the type of $\tilde{\mathfrak{g}}$.

In particular, when $N=G$, $\check{\textbf{s}}_k=\hat{\textbf{s}}_k=\textbf{s}_k$, the
multiplicity of an irreducible $G$-module $\rho_j$ in the $k$th symmetric power $S^{k}(V)$ for each $j\in {\rm I_{G}}$, then the two
Kostant-type generating functions coincide to become the Kostant generating function
\begin{equation*}
  \textbf{S}(t)=\sum_{k=0}^{\infty}\textbf{s}_kt^k.
\end{equation*}

Kostant \cite{Kos2} has shown that the generating function $\textbf{S}(t)$ is a rational function over $\mathbb Z$.
We aim to determine
$\check{\textbf{S}}(t)$ and $\hat{\textbf{S}}(t)$ using the McKay-Slodowy correspondence.

Let $\textbf{x}_k$ denote either of the Kostant vector in \eqref{vectors}, 
similarly let $\textbf{X}(t)$ be the corresponding Kostant-type generating function in either of \eqref{powerseries}, 
and ${\rm \widetilde{X}}$ the corresponding adjacency matrix of $\mathcal{R}_{V}({\check{G}})$ (resp.
$\mathcal{R}_{V}({\hat{N}})$ or $\mathcal{R}_{V}(G)$), so ${\rm \widetilde{X}}=2I-C^T_{\widetilde{X}}$, where
$C_{\widetilde{X}}$ is the Cartan matrix. It is clear that $\textbf{x}_k$ is the vector in the root lattice of $\tilde{\mathfrak{g}}$.
For each $k\in \mathbb{Z}_+$,
the Clebsch-Gordon formula implies that
\begin{equation*}
S^{k}(V)\otimes V= S^{k+1}(V) +S^{k-1}(V),
\end{equation*}
therefore
\begin{equation*}
  \left(1-(t{\rm \widetilde{X}}-t^2)\right)\textbf{X}(t)=\alpha_0,
\end{equation*}
where $\alpha_0\in \tilde{\mathfrak{h}}'$  is the special affine root of $\tilde{\mathfrak{g}}$.
Subsequently 
\begin{equation*}
 \textbf{X}(t)=\sum_{k=0}^{\infty}(t{\rm \widetilde{X}}-t^2)^k\alpha_0,
\end{equation*}
which means that Kostant's vector $\textbf{x}_k$ are explicitly determined and we have
the following result. 
\begin{lemm}\label{vector&matrix}
Let $N\unlhd G$ be a pair of finite subgroups in $\mathrm{SL}_2(\mathbb{C})$ corresponding to the affine type
$\widetilde{X}$ via the McKay-Slodowy correspondence. Let
$\textbf{x}_k$ be
Kostant's vector for the affine Lie algebra $\tilde{\mathfrak{g}}$, then
\begin{equation}\label{xk}
  \textbf{x}_k =\left(\sum_{j=0}^{[k/2]}(-1)^j\binom{k-j}{j}(2I-C^T_{\rm \widetilde{X}})^{k-2j}\right)\alpha_0,
\end{equation}
where $C_{\rm \widetilde{X}}$  is the Cartan matrix and $\alpha_0$ is the special affine root of the affine Lie algebra $\tilde{\mathfrak{g}}$.
\end{lemm}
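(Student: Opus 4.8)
The plan is to take as the starting point the operator identity $\textbf{X}(t)=\sum_{k=0}^{\infty}(t\widetilde{X}-t^2)^k\alpha_0$ already derived from the Clebsch--Gordan recursion, and then simply to extract the coefficient of $t^k$. First I would note that this geometric-series inverse is legitimate: the operator $1-(t\widetilde{X}-t^2)$ specializes to the identity at $t=0$, so it is invertible in the ring of formal power series in $t$ with matrix coefficients, and for each fixed power of $t$ only finitely many terms of the series contribute. Hence $\textbf{x}_k$ is well defined as the coefficient of $t^k$ in $\textbf{X}(t)$, and it suffices to compute this coefficient.

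Next I would expand each summand by the binomial theorem. Since the scalar $t^2$ commutes with $t\widetilde{X}$, we have $(t\widetilde{X}-t^2)^m=\sum_{j=0}^{m}(-1)^j\binom{m}{j}t^{m+j}\widetilde{X}^{m-j}$, so that $\textbf{X}(t)=\sum_{m=0}^{\infty}\sum_{j=0}^{m}(-1)^j\binom{m}{j}t^{m+j}\widetilde{X}^{m-j}\alpha_0$. To isolate the coefficient of $t^k$ I would impose $m+j=k$, i.e.\ $m=k-j$; the constraint $0\le j\le m$ then becomes $0\le j\le k-j$, equivalently $0\le j\le[k/2]$. Collecting these terms gives $\textbf{x}_k=\big(\sum_{j=0}^{[k/2]}(-1)^j\binom{k-j}{j}\widetilde{X}^{k-2j}\big)\alpha_0$. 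Finally I would substitute the relation $\widetilde{X}=2I-C_{\widetilde{X}}^{T}$ recorded before the lemma to rewrite $\widetilde{X}^{k-2j}=(2I-C_{\widetilde{X}}^{T})^{k-2j}$, yielding the asserted formula \eqref{xk}.

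As a consistency check, and as an alternative route that bypasses the generating function, I would verify directly that the right-hand side of \eqref{xk} satisfies the three-term recursion $\textbf{x}_{k+1}=\widetilde{X}\textbf{x}_k-\textbf{x}_{k-1}$ together with the initial data $\textbf{x}_0=\alpha_0$ and $\textbf{x}_1=\widetilde{X}\alpha_0$. Matching the coefficient of $\widetilde{X}^{k+1-2j}$ on both sides reduces exactly to the Pascal-type identity $\binom{k-j}{j}+\binom{k-j}{j-1}=\binom{k+1-j}{j}$, and it identifies $\textbf{x}_k$ with $S_k(\widetilde{X})\alpha_0$, where $S_k$ is the normalized Chebyshev polynomial of the second kind determined by $S_{k+1}=xS_k-S_{k-1}$, $S_0=1$, $S_1=x$.

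There is no serious obstacle here: the content is entirely a coefficient extraction from an already-established rational generating function. The only points requiring care are the bookkeeping of the summation range---ensuring that the constraint $j\le m$ becomes $j\le[k/2]$ after the substitution $m=k-j$---and the observation that the binomial expansion is valid precisely because $t^2 I$ is central, so that powers of $t\widetilde{X}$ and of $t^2 I$ may be separated freely.
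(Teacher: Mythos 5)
Your proposal is correct and follows essentially the same route as the paper: the paper derives $\textbf{X}(t)=\sum_{k\ge 0}(t\widetilde{\rm X}-t^2)^k\alpha_0$ from the Clebsch--Gordan recursion and then reads off $\textbf{x}_k$, which is exactly the coefficient extraction you carry out (the paper merely leaves the binomial expansion and the reindexing $m=k-j$, $0\le j\le [k/2]$ implicit). Your added Chebyshev-recursion consistency check is a nice touch but not needed.
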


\subsection{Kostant's vectors and the affine Coxeter elements}

Let $\widetilde{W}$ be the affine Weyl group of the affine Lie algebra $\tilde{\mathfrak{g}}$
of type $\widetilde{X}$ generated by the reflections $s_0, s_1,\ldots, s_l$ associated the simple roots
$\{\alpha_j\}$ by $$s_i\alpha_j=\alpha_j-A_{ij}\alpha_i,$$
where $C_{\widetilde{X}}=(A_{ij})$ is the affine Cartan matrix of $\tilde{\mathfrak{g}}$.
Let $\mathcal{C}_{a}$ be the corresponding affine Coxeter element of $\tilde{\mathfrak{g}}$.
Similarly, the Weyl group $W$ of the simple Lie algebra $\mathfrak{g}$ is the subgroup generated by
the simple reflections $s_1,\ldots, s_l$, and denote the corresponding Coxeter element of $W$ by $\mathcal{C}$.

The set $\widetilde{\Pi}$ of simple roots can be partitioned into three subsets:
$\{\alpha_0\}$, $\Pi_1$ and $\Pi_2$ so that the simple roots of $\Pi_j$ $(j=1,2)$
consists of simple reflections of mutually orthogonal roots. Let $P_j$ denote the index set of $\Pi_j$.
Define
\begin{equation}\label{C_i}
  \mathcal{C}_j=\prod_{r\in P_j}s_{r}\in W.
\end{equation}
Then $\mathcal{C}_1$ and $\mathcal{C}_2$ are orthogonal involutions:
${\mathcal{C}_1}^2={\mathcal{C}_2}^2=e$, where $e$ is the identity of $\widetilde{W}$.
Clearly $\mathcal{C}=\mathcal{C}_2\mathcal{C}_1$ is the Coxeter element of $W$. As $\alpha_0$ is orthogonal to
all finite simple roots, let $\widetilde{\Pi}_1=\Pi_1$  and $\widetilde{\Pi}_2=\Pi_2\cup\{\alpha_0\}$.
Also let $\widetilde{P}_1=P_1$ and $\widetilde{P}_2=P_2\cup\{ 0\}$.


The follwing result will fix the zero component of Kostant's vector $\textbf{x}_k$
for the affine Lie algebra $\hat{\mathfrak{g}}$.

 \begin{prop}
Let $N\unlhd G\leq \mathrm{SL}_2(\mathbb{C})$ be the pair realizing the affine Lie algebra $\hat{\mathfrak{g}}$ in the McKay-Slodowy correspondence.
Let
$\textbf{x}_k$ be  Kostant's vector for the affine Lie algebra $\hat{\mathfrak{g}}$.
Then the multiplicities of the corresponding roots in the vector $\textbf{x}_k$ are
\begin{equation*}
  \check{s}^{i}_{k}=\hat{s}^{i}_{k}=0
\end{equation*}
 for all $i\in \widetilde{P}_j$,
where $j\in\{1,2\}$ and  $j-1\equiv k \  {\rm mod} \  2$.
\end{prop}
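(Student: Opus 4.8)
The plan is to reduce the assertion to the bipartiteness of the affine Dynkin diagram of $\hat{\mathfrak{g}}$ and then to track parity through the closed formula for $\textbf{x}_k$ supplied by Lemma \ref{vector&matrix}. Recall the partition $\widetilde{\Pi}=\widetilde{\Pi}_1\sqcup\widetilde{\Pi}_2$ with $\widetilde{\Pi}_1=\Pi_1$ and $\widetilde{\Pi}_2=\Pi_2\cup\{\alpha_0\}$, each block consisting of pairwise orthogonal roots. I would first record that, because every multiply laced affine Dynkin diagram is a tree, this partition is precisely the two-coloring of the diagram, so no edge joins two vertices of a common block. Ordering the index set as $(\widetilde{P}_1,\widetilde{P}_2)$, this means the adjacency matrix $\widetilde{X}=2I-C^T_{\widetilde{X}}$ has vanishing diagonal blocks,
\begin{equation*}
\widetilde{X}=\begin{pmatrix} 0 & B\\ C & 0\end{pmatrix},
\end{equation*}
so that $\widetilde{X}$ sends $\mathrm{span}(\widetilde{\Pi}_1)$ into $\mathrm{span}(\widetilde{\Pi}_2)$ and conversely.

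The key observation will then be that each multiplication by $\widetilde{X}$ interchanges the two blocks. Since $\alpha_0\in\widetilde{\Pi}_2$, a short induction on $m$ shows that $\widetilde{X}^m\alpha_0$ is supported on $\widetilde{P}_2$ for $m$ even and on $\widetilde{P}_1$ for $m$ odd. I would then feed this into the formula
\begin{equation*}
\textbf{x}_k=\left(\sum_{j=0}^{[k/2]}(-1)^j\binom{k-j}{j}\widetilde{X}^{k-2j}\right)\alpha_0
\end{equation*}
of Lemma \ref{vector&matrix}, observing that every exponent $k-2j$ has the same parity as $k$. Consequently, for $k$ even all summands lie in $\mathrm{span}(\widetilde{\Pi}_2)$ and $\textbf{x}_k$ has no component along any $\alpha_i$ with $i\in\widetilde{P}_1$, whereas for $k$ odd all summands lie in $\mathrm{span}(\widetilde{\Pi}_1)$ and $\textbf{x}_k$ has no component along any $\alpha_i$ with $i\in\widetilde{P}_2$. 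Matching this with the expansion $\textbf{x}_k=\sum_i s^i_k\alpha_i$ yields $\check{s}^i_k=\hat{s}^i_k=0$ for all $i\in\widetilde{P}_j$ exactly when $j-1\equiv k\pmod 2$, which is the claim.

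As an alternative that avoids the binomial formula, I could run the same parity bookkeeping on the three-term recursion $\textbf{x}_k=\widetilde{X}\textbf{x}_{k-1}-\textbf{x}_{k-2}$ (with $\textbf{x}_0=\alpha_0$ and $\textbf{x}_{-1}=0$) that follows by comparing coefficients in $(1-(t\widetilde{X}-t^2))\textbf{X}(t)=\alpha_0$. Assuming inductively that $\textbf{x}_{k-1}$ is supported on the block of parity $k-1$ and $\textbf{x}_{k-2}$ on the block of parity $k$ (since $k-2\equiv k\pmod 2$), the term $\widetilde{X}\textbf{x}_{k-1}$ is pushed into the block of parity $k$ and subtracting $\textbf{x}_{k-2}$ preserves that support, completing the induction.

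I expect the one genuine point to verify to be the bipartite block structure itself, namely that $\widetilde{\Pi}_2=\Pi_2\cup\{\alpha_0\}$ is an independent set, equivalently that $\alpha_0$ is adjacent only to vertices of $\Pi_1$. This is where the special affine root plays its role, and it is not automatic for an arbitrary Kac--Moody diagram. For the present situation, however, the affine types realized by $\mathcal{R}_{V}(\hat{N})$ and $\mathcal{R}_{\check{V}}(\check{G})$ all have tree diagrams, hence are bipartite with the stated two-coloring, so this step becomes routine once the diagram type is pinned down through the McKay-Slodowy correspondence.
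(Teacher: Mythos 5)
Your argument is correct, but it is genuinely different from the paper's. The paper proves this by evaluating characters at the central element $-1\in\mathrm{SL}_2(\mathbb{C})$: since $-\mathrm{id}$ acts on $S^k(V)$ by the scalar $(-1)^k$, only those constituents $\check\rho_i$ (resp.\ $\hat\phi_i$) on which $-1$ acts by the same sign $(-1)^k$ can occur, and the sign with which the central element acts is constant on each block of the two-coloring and opposite on adjacent vertices (because $-1$ acts by $-1$ on $V$). Your route replaces this representation-theoretic input by pure linear algebra: the two-coloring makes the adjacency matrix block-anti-diagonal, so $\widetilde{X}^m\alpha_0$ alternates between the two coordinate blocks, and since $\textbf{x}_k$ is a polynomial in $\widetilde{X}$ involving only exponents of the parity of $k$ applied to $\alpha_0$, its support lands in a single block. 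What your approach buys is self-containedness at the level of Lemma \ref{vector&matrix} (or the three-term recursion), and it sidesteps a step the paper leaves implicit, namely converting the single character identity $\sum_i\check s^i_k\chi_{\check\rho_i}(-1)=(-1)^k\dim S^k(V)$ into the vanishing of individual nonnegative multiplicities (which requires comparing with $\sum_i\check s^i_k\dim\check\rho_i=k+1$, or invoking Schur's lemma for the central element). What the paper's approach buys is a conceptual explanation of the bipartition in terms of the group itself. Both arguments hinge on the same hypothesis --- that $\{\alpha_0\}\cup\Pi_2$ and $\Pi_1$ form a proper two-coloring --- and both silently exclude the one case where it fails, the odd cycle $A^{(1)}_{n-1}$ for cyclic $G$ of odd order (where $-1\notin G$, so the paper's argument also breaks down); you correctly flag this as the one point needing verification, and it is supplied by the setup preceding the proposition.
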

\begin{proof}
We only show the case of the $N$-restriction, the other case is similar.
Note that the decomposition
$S^{k}(V)\big|_{N}=\sum_{i=0}^{l}\check{s}^{i}_{k}\check{\rho}_i$ means that
the corresponding character value at $-1\in \mathrm{SL}_2$ is
\begin{equation*}
\chi_{S^{k}(V)}(-1)=\sum_{i=0}^{l}\check{s}^{i}_{k}\chi_{\check{\rho}_i}(-1)=(-1)^k.
\end{equation*}
The above formula gives rise to $\check{s}^{i}_{k}=0$  for all $i\in \widetilde{P}_j$,
where $j\in\{1,2\}$ and  $j-1\equiv k \  {\rm mod} \  2$.

\end{proof}

Let $\mathcal{C}_{a_1}=\mathcal{C}_1$ and $\mathcal{C}_{a_2}=s_0\mathcal{C}_2=\mathcal{C}_2 s_0$,
where $s_0$ is the reflection corresponding to $\alpha_0$ for the affine Lie algebra $\hat{\mathfrak{g}}$.
It is clear that ${\mathcal{C}_{a_1}}^2={\mathcal{C}_{a_2}}^2=e$ and
$\mathcal{C}_{a}=\mathcal{C}_{a_2}\mathcal{C}_{a_1}=s_0\mathcal{C}_2\mathcal{C}_1=s_0\mathcal{C}$ is  the Coxeter transformation in $\widetilde{W}$.
The adjacency matrix of $\hat{\mathfrak{g}}$ can be expressed as
\begin{center}
${\rm \widetilde{X}}=\mathcal{C}_{a_1}+\mathcal{C}_{a_2},$
\end{center}
which has been investigated by different methods in \cite{BLM,Kos2,Ste}.

Kostant has shown that the vector $\textbf{s}_k$ can be
obtained by the affine Coxeter transformation via the adjacency matrix of the simply laced affine Lie algebras.
In the following we generalize this result to arbitrary affine Lie algebra and it turns out a uniformed formula can be found for all cases.

The square of the adjacency matrix ${\rm \widetilde{X}}$ of $\tilde{\mathfrak{g}}$ can be expressed as
\begin{eqnarray*}
  {\rm \widetilde{X}}^{2}
&=& (\mathcal{C}_{a}+I)+({\mathcal{C}_{a}}^{-1}+I) \\
   &=& (\mathcal{C}_{a}+I)({\mathcal{C}_{a}}^{-1}+I) \\
   &=& (\mathcal{C}_{a}+I)^2{\mathcal{C}_{a}}^{-1}.
\end{eqnarray*}
It then follows for $m\in \mathbb{Z}_{+}$
\begin{equation}\label{adjmatrix}
  {\rm \widetilde{X}}^{2m}=\sum_{i=0}^{2m}\binom{2m}{i}{\mathcal{C}_{a}}^{i-m}.
\end{equation}

Combining with Lemma \ref{vector&matrix} we see that Kostant's vector
\begin{align}\label{e:xk1}
  \textbf{x}_k &= \left(\sum_{j=0}^{[k/2]}(-1)^j\binom{k-j}{j}{\rm \widetilde{X}}^{k-2j}\right)\alpha_0 \\ \label{e:xk}
   &=\sum_{j=0}^{m}(-1)^j\binom{k-j}{j}\sum_{i=0}^{2(m-j)}\binom{2(m-j)}{i}{\mathcal{C}_{a}}^{i+j-m}
\left\{
  \begin{array}{ll}
    \alpha_0 &\\
    \textbf{x}_1 &
  \end{array},
\right.
\end{align}
where the last factor is $\alpha_0$ for $k=2m$ or $\textbf{x}_1$ for $k=2m+1$.

To proceed, we need the following useful identities.
\begin{lemm}\emph{\cite[Lem. 3.11]{Kos2}}\label{combination}
Let $j,n\in \mathbb{Z}_{+}$, where $j\leq n$. Then
\begin{equation*}
  \sum_{i=0}^{j}(-1)^i\binom{n-2i}{j-i}\binom{n-i}{i}=1
\end{equation*}
and
\begin{equation*}
  \sum_{i=0}^{j}(-1)^i\binom{n-2i}{j-i}\binom{n+1-i}{i}=\left\{
                                                           \begin{array}{ll}
                                                             1, & \hbox{if \ j \ is \ even,} \\
                                                             0, & \hbox{if \ j \ is \ odd.}
                                                           \end{array}
                                                         \right.
\end{equation*}
\end{lemm}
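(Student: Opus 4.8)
The plan is to prove both identities simultaneously by fixing $n$ and forming the ordinary generating functions of the two left-hand sides in an auxiliary variable $x$, then reading off coefficients. Write $f(n,j)$ and $g(n,j)$ for the first and second sums. First I would compute, interchanging the order of summation (legitimate since each coefficient is a finite sum) and substituting $m=j-i$,
\[
\sum_{j\ge 0}f(n,j)x^j=\sum_{i\ge 0}(-1)^i\binom{n-i}{i}x^i\sum_{m\ge 0}\binom{n-2i}{m}x^m=(1+x)^n\sum_{i\ge 0}\binom{n-i}{i}\Bigl(\tfrac{-x}{(1+x)^2}\Bigr)^i,
\]
where the inner sum collapses to $(1+x)^{n-2i}$ by the binomial theorem, and the prefactor $(1+x)^n$ is pulled out. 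The identical computation with $\binom{n+1-i}{i}$ in place of $\binom{n-i}{i}$ produces the generating function of $g(n,j)$.

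The key step is to recognize the remaining sum as the Fibonacci/Chebyshev-type polynomial $F_N(z)=\sum_{i}\binom{N-i}{i}z^i$. From the recurrence $F_N=F_{N-1}+zF_{N-2}$ one obtains the closed form $F_N(z)=\frac{\lambda_+^{N+1}-\lambda_-^{N+1}}{\lambda_+-\lambda_-}$, where $\lambda_\pm$ are the roots of $\lambda^2-\lambda-z=0$; this is a genuine polynomial in $z$ since it is symmetric in $\lambda_\pm$. I would then specialize $z=-x/(1+x)^2$. A short computation gives $1+4z=(1-x)^2/(1+x)^2$, so the roots simplify to $\lambda_+=1/(1+x)$ and $\lambda_-=x/(1+x)$, whence
\[
F_N\Bigl(\tfrac{-x}{(1+x)^2}\Bigr)=\frac{(1+x)^{-(N+1)}\bigl(1-x^{N+1}\bigr)}{(1-x)/(1+x)}=\frac{1-x^{N+1}}{(1+x)^N(1-x)}.
\]

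Feeding $N=n$ back into the generating function for $f$ cancels the prefactor $(1+x)^n$ and yields $\sum_{j}f(n,j)x^j=\frac{1-x^{n+1}}{1-x}=1+x+\cdots+x^n$, so $f(n,j)=1$ for $0\le j\le n$, which is the first identity. Taking $N=n+1$ for $g$ gives $\sum_j g(n,j)x^j=\frac{1-x^{n+2}}{1-x^2}$.

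The main obstacle—and the only place demanding care—is extracting the coefficients of $\frac{1-x^{n+2}}{1-x^2}$ in the range $0\le j\le n$, since this rational function fails to be a polynomial when $n$ is odd. I would treat it as the formal product $(1-x^{n+2})\sum_{m\ge0}x^{2m}$ and argue by parity: the term $1\cdot x^{2m}$ contributes $1$ to every even-index coefficient, while the correction $-x^{n+2}$ first affects coefficients of index $\ge n+2$ and hence cannot interfere in the claimed range $j\le n$. Thus for $0\le j\le n$ one reads off $g(n,j)=1$ when $j$ is even and $0$ when $j$ is odd, uniformly in the parity of $n$, which is exactly the second identity. This parity bookkeeping, together with verifying that the closed form of $F_N$ holds as an identity of polynomials in $z$ prior to specialization, is the crux; the remaining manipulations are routine.
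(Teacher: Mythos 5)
Your proof is correct, but be aware that the paper contains no proof of this lemma to compare against: it is quoted verbatim from Kostant \cite[Lem.~3.11]{Kos2} and used as a black box, so your argument is necessarily an independent one. The generating-function route checks out: the interchange of summation, the collapse of the inner sum to $(1+x)^{n-2i}$, the closed form $F_N(z)=(\lambda_+^{N+1}-\lambda_-^{N+1})/(\lambda_+-\lambda_-)$ for $\sum_i\binom{N-i}{i}z^i$, and the specialization $z=-x/(1+x)^2$ giving $\lambda_+=1/(1+x)$ and $\lambda_-=x/(1+x)$ are all right, and the resulting series $(1-x^{n+1})/(1-x)$ and $(1-x^{n+2})/(1-x^2)$ deliver exactly the claimed coefficients for $0\le j\le n$; your observation that the correction $-x^{n+2}$ cannot reach degree $\le n$ is the correct way to handle the fact that the second rational function need not be a polynomial. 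This packages what is otherwise an induction on $j$ via Pascal's rule into a single closed form, which is arguably more transparent. One point you must make explicit, however: the step $\sum_{m}\binom{n-2i}{m}x^m=(1+x)^{n-2i}$ silently invokes the generalized binomial convention $\binom{-1}{m}=(-1)^m$. In the first sum this never matters, since $\binom{n-i}{i}\ne 0$ forces $n-2i\ge 0$; but in the second sum the term $i=(n+1)/2$ survives for odd $n$ and contributes $\binom{-1}{j-i}$ (for instance $n=j=3$ gives $1+0+\binom{-1}{1}+0$, which equals the claimed value $0$ only if $\binom{-1}{1}=-1$). Under the alternative convention that $\binom{N}{m}=0$ for $N<0$, the second identity as displayed would be false, so you should state the convention; with that caveat recorded, there is no gap.
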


Using Lemma \ref{combination} the expressing on the right of \eqref{e:xk1}
can be simplified, and 
Kostant's vector $\textbf{x}_k$ can be written as a finite sum of powers of the Coxeter element $\mathcal{C}_{a}\in \widetilde{W}$.
In particular, for $k=2m$, $\textbf{x}_k$ is the sum of the orbit of $\alpha_0$ under the action of $\mathcal{C}_{a}$.
Summarizing the above, we have shown that

\begin{prop}\label{orbits}
Let $\textbf{x}_k$ be one of Kostant's vectors  corresponding to the decomposition of the $N$-induction or $G$-restriction of the $k$-th irreducible
$\mathrm{SL}_2(\mathbb{C})$-module. Let
$\mathcal{C}_{a}$ be the affine Coxeter transformation of the affine Lie algebra $\tilde{\mathfrak{g}}$ corresponding to
$(N, G)$.
Assume $\alpha_0$ is the special affine root. Then for $k=2m$,
\begin{equation}\label{xk1}
\textbf{x}_k=\left(\sum_{j=0}^{2m}{\mathcal{C}_{a}}^{j-m} \right)\alpha_0;
\end{equation}
and for $k=2m+1$,
\begin{equation}\label{xk2}
\textbf{x}_k=\left(\sum_{j=0}^{m}{\mathcal{C}_{a}}^{2j-m} \right)\textbf{x}_1.
\end{equation}
\end{prop}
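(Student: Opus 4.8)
The plan is to start from the expansion \eqref{e:xk} of Kostant's vector, which already writes $\textbf{x}_k$ as a double sum of powers of the affine Coxeter element $\mathcal{C}_{a}$ acting on $\alpha_0$ (when $k=2m$) or on $\textbf{x}_1$ (when $k=2m+1$), and to collapse each double sum into a single string of consecutive powers of $\mathcal{C}_{a}$ by means of the combinatorial identities of Lemma \ref{combination}. Since \eqref{e:xk} itself already follows from combining Lemma \ref{vector&matrix} with the expansion \eqref{adjmatrix} of $\widetilde{X}^{2m}$, the entire argument is formal, taking place in the commutative algebra generated by $\mathcal{C}_{a}$. The two cases $k=2m$ and $k=2m+1$ run in parallel and differ only in which of the two identities of Lemma \ref{combination} is invoked.

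First I would reorganize \eqref{e:xk} by collecting the coefficient of a fixed power $\mathcal{C}_{a}^{p}$. Denoting by $r$ and $i$ the outer and inner summation indices of \eqref{e:xk}, the exponent of $\mathcal{C}_{a}$ is $p=i+r-m$, so for fixed $p$ the inner index is determined by $i=p-r+m$ and the coefficient of $\mathcal{C}_{a}^{p}$ is the single alternating sum
\[
\sum_{r}(-1)^r\binom{k-r}{r}\binom{2(m-r)}{p-r+m},
\]
where $r$ runs over the integers making both binomials nonzero. Checking the inequalities $0\le r\le m$ and $0\le p-r+m\le 2(m-r)$ shows that $p$ ranges over $-m\le p\le m$, and that $r$ runs over $0\le r\le m-p$ when $p\ge0$ and over $0\le r\le m+p$ when $p<0$.

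Next I would bring this coefficient into the exact shape required by Lemma \ref{combination}. Using $\binom{2(m-r)}{p-r+m}=\binom{2(m-r)}{m-p-r}$ when $p\ge0$ (and leaving the binomial as it stands when $p<0$) and setting $n=2m$, the inner binomial becomes $\binom{n-2r}{q-r}$ with $q:=m-|p|$, while the outer one is $\binom{n-r}{r}$ for $k=2m$ and $\binom{n+1-r}{r}$ for $k=2m+1$; moreover $0\le q\le m$ and the range $0\le r\le q$ matches the hypothesis of Lemma \ref{combination} on both sides of zero. For $k=2m$ the first identity gives coefficient $1$ for every $-m\le p\le m$, and reindexing $p=j-m$, $0\le j\le 2m$, yields \eqref{xk1}. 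For $k=2m+1$ the second identity gives coefficient $1$ exactly when $q=m-|p|$, equivalently $m-p$, is even and $0$ otherwise; the surviving powers are thus $p\in\{-m,-m+2,\dots,m\}$, and the substitution $p=2j-m$, $0\le j\le m$, produces \eqref{xk2}.

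The main obstacle is bookkeeping rather than any deep idea: one must keep the summation limit $0\le r\le q$ synchronized with the hypothesis $r\le q$ of Lemma \ref{combination} across the sign split $p\ge0$ versus $p<0$, applying the binomial symmetry on the one side precisely so that the single value $q=m-|p|$ serves both. In the odd case there is the further point of translating the parity condition delivered by the second identity of Lemma \ref{combination} and verifying that the indices $p\equiv m\pmod 2$ in $[-m,m]$ are exactly parametrized by $p=2j-m$ with $0\le j\le m$, so that no power of $\mathcal{C}_{a}$ is over- or under-counted.
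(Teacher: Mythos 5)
Your proposal is correct and follows exactly the route the paper takes: start from the double-sum expansion \eqref{e:xk} obtained from Lemma \ref{vector&matrix} and \eqref{adjmatrix}, collect the coefficient of each power $\mathcal{C}_a^{p}$, and collapse it via the two identities of Lemma \ref{combination}. In fact you supply more of the index bookkeeping (the symmetry $\binom{2(m-r)}{p-r+m}=\binom{2(m-r)}{m-r-p}$, the unified parameter $q=m-|p|$, and the parity analysis in the odd case) than the paper, which only asserts that the simplification follows from Lemma \ref{combination}.
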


\subsection{Formulas for Kostant-type generating functions}

In this subsection, we follow the argument of Kostant \cite{Kos2} 
to derive a unified formula for Kostant-type generating function in all cases. 

Let $a_0=1$ and define
\begin{equation}\label{a_i}
  a_i=2\frac{(\mathcal{C}^i\alpha_0,\alpha_0)}{(\alpha_0,\alpha_0)}
\end{equation}
for $i\in  \mathbb{Z}_+$. 
Clearly $a_i$ are integers since $\mathcal{C}^i\alpha_0$ is an element of the root lattice.
The sequence is periodic since $\mathcal{C}^h=e$ for the Coxeter number $h$ of the simple Lie algebra
$\mathfrak{g}$.

Recall that the affine Coxeter transformation
$\mathcal{C}_{a}=s_0\mathcal{C}_2\mathcal{C}_1=s_0\mathcal{C}$. The first few elements of the orbit of $\alpha_0$ under the action of $\mathcal{C}_{a}$ are
\begin{align*}
  \mathcal{C}_{a}\alpha_0 =& s_0(\mathcal{C}\alpha_0)=\mathcal{C}\alpha_0-a_1\alpha_0,\\
  {\mathcal{C}_{a}}^2\alpha_0 =&\mathcal{C}^2\alpha_0-a_1\mathcal{C}\alpha_0+(-a_2+a_1^2)\alpha_0,\\
  {\mathcal{C}_{a}}^3\alpha_0 =&\mathcal{C}^3\alpha_0-a_1\mathcal{C}^2\alpha_0+(-a_2+a_1^2)\mathcal{C}\alpha_0+
(-a_3+2a_1a_2-a_1^3)\alpha_0,\\
  {\mathcal{C}_{a}}^4\alpha_0 =&\mathcal{C}^4\alpha_0-a_1\mathcal{C}^3\alpha_0+(-a_2+a_1^2)\mathcal{C}^2\alpha_0
       +(-a_3+2a_1a_2-a_1^3)\mathcal{C}\alpha_0\\
   & +(-a_4+2a_1a_3-3a_1^2a_2+a_2^2+a_1^4)\alpha_0.
\end{align*}
The elements $\mathcal{C}^i\alpha_0$ can be iteratively computed as follows. Suppose
\begin{equation}\label{e:Ca}
{\mathcal{C}_a}^i\alpha_0=\sum_{j=0}^ib_j\mathcal C^{i-j}\alpha_0.
\end{equation}
Then
\begin{align*}
  {\mathcal{C}_{a}}^{i+1}\alpha_0 &= s_0 \mathcal C{\mathcal C_a}^i\alpha_0
  =\mathcal C{\mathcal C_a}^i\alpha_0-\langle \mathcal C{\mathcal C_a}^i\alpha_0, \alpha_0^{\vee}\rangle\alpha_0\\
  &= \sum_{j=0}^ib_j\mathcal C^{i+1-j}\alpha_0-\sum_{j=0}^ib_ja_{i+1-j}\alpha_0.
\end{align*}
Therefore the coefficients in \eqref{e:Ca} are given inductively by
\begin{equation}\label{b_i}
  b_0=1 \ \ {\rm and} \ \ b_s=-\sum_{j=0}^{i-1}b_ja_{s-j}, \ \ s\in \mathbb{Z}_+.
\end{equation}
Alternatively, define the generating series $a(t)=\sum_{i=0}^{\infty}a_it^i$ for \eqref{a_i}. Then the generating series $b(t)=\sum_{i=0}^{\infty}b_it^i$ for the coefficients of \eqref{e:Ca}
is actually the reciprocal of $a(t)$, i.e.
\begin{equation}\label{inverse}
a(t)b(t)=1.
\end{equation}
Note that $a(t)$ is a rational function due to periodicity of the $a_i$:  
\begin{align}
a(t)=\frac{\sum_{i=0}^{h-1}a_it^i+t^h}{1-t^h}.
\end{align}
Therefore
\begin{align}
b(t)=\frac{1-t^h}{\sum_{i=0}^{h-1}a_it^i+t^h}.
\end{align}
Thus we have the general expression for the orbit of $\alpha_0$ under $\langle \mathcal{C}_a\rangle $ for the general affine Lie algebra
$\tilde{\mathfrak{g}}$,
generalizing Kostant's formula for the simply laced case {\rm \cite[Lem. 4.2-4.3]{Kos2}}:
\begin{lemm}\label{powerofca}
Let $\mathcal{C}_{a}$ be the affine Coxeter transformation of an affine Lie algebra with
the affine root $\alpha_0$. Then the action of ${\mathcal{C}_{a}}^i$ on $\alpha_0$ is given by
\begin{equation*}
  {\mathcal{C}_{a}}^i\alpha_0=\sum_{j=0}^ib_j\mathcal{C}^{i-j}\alpha_0
\end{equation*}
and
\begin{equation*}
  {\mathcal{C}_{a}}^{-i}\alpha_0=-\sum_{j=0}^{i-1}b_j\mathcal{C}_{a_1}\mathcal{C}^{i-j-1}\alpha_0,
\end{equation*}
where $\mathcal{C}$ is the finite Coxeter transformation, $b_j$ and $\mathcal{C}_{a_1}=\mathcal C_1$
are defined in \eqref{b_i} and \eqref{C_i} respectively.
\end{lemm}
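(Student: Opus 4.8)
The plan is to treat the two displayed formulas separately: the positive-power formula is a direct induction that merely repackages the recursion already set up in \eqref{e:Ca}--\eqref{b_i}, while the negative-power formula is reduced to the positive one by exploiting the factorization $\mathcal{C}_a=\mathcal{C}_{a_2}\mathcal{C}_{a_1}$ into involutions together with the behaviour of $\alpha_0$ under $\mathcal{C}_{a_2}$.

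For the positive formula I would argue by induction on $i$. The base case $i=0$ is immediate, since $\mathcal{C}_a^0\alpha_0=\alpha_0=b_0\alpha_0$ with $b_0=1$. For the step I would write $\mathcal{C}_a^{i+1}\alpha_0=s_0\mathcal{C}\,\mathcal{C}_a^i\alpha_0$, insert the hypothesis $\mathcal{C}_a^i\alpha_0=\sum_{j=0}^i b_j\mathcal{C}^{i-j}\alpha_0$, and apply the reflection formula $s_0 v=v-\langle v,\alpha_0^{\vee}\rangle\alpha_0$ term by term. The one bookkeeping point is the identification $\langle\mathcal{C}^{\,i+1-j}\alpha_0,\alpha_0^{\vee}\rangle=a_{i+1-j}$, which is exactly the definition \eqref{a_i}; the powers $\mathcal{C}^{\,i+1-j}\alpha_0$ then carry over the coefficients $b_0,\dots,b_i$, and collecting the coefficient of $\alpha_0$ reproduces $b_{i+1}=-\sum_{j=0}^{i}b_j a_{i+1-j}$, which is the recursion \eqref{b_i}. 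This is precisely the computation displayed before the statement, so the induction closes with no new input.

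The negative formula carries the real content, and I would use three structural identities in $\widetilde{W}$. First, from $\mathcal{C}_a=\mathcal{C}_{a_2}\mathcal{C}_{a_1}$ with $\mathcal{C}_{a_1}^2=\mathcal{C}_{a_2}^2=e$, conjugation gives $\mathcal{C}_{a_1}\mathcal{C}_a\mathcal{C}_{a_1}=\mathcal{C}_{a_1}\mathcal{C}_{a_2}=\mathcal{C}_a^{-1}$, hence $\mathcal{C}_a^{-i}=\mathcal{C}_{a_1}\mathcal{C}_a^{i}\mathcal{C}_{a_1}$. Second, since $\alpha_0$ is orthogonal to every root of $\Pi_2$ we have $\mathcal{C}_2\alpha_0=\alpha_0$, so $\mathcal{C}_{a_2}\alpha_0=s_0\mathcal{C}_2\alpha_0=s_0\alpha_0=-\alpha_0$. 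Third, $\mathcal{C}_a\mathcal{C}_{a_1}=\mathcal{C}_{a_2}$ (again using $\mathcal{C}_{a_1}^2=e$) yields the sliding identity $\mathcal{C}_a^{i}\mathcal{C}_{a_1}=\mathcal{C}_a^{i-1}\mathcal{C}_{a_2}$. Combining them,
\begin{equation*}
\mathcal{C}_a^{-i}\alpha_0=\mathcal{C}_{a_1}\mathcal{C}_a^{i}\mathcal{C}_{a_1}\alpha_0=\mathcal{C}_{a_1}\mathcal{C}_a^{i-1}\mathcal{C}_{a_2}\alpha_0=-\mathcal{C}_{a_1}\mathcal{C}_a^{i-1}\alpha_0.
\end{equation*}
Substituting the already-proved positive formula for $\mathcal{C}_a^{i-1}\alpha_0$ and pulling the linear map $\mathcal{C}_{a_1}$ through the finite sum then gives $-\sum_{j=0}^{i-1}b_j\mathcal{C}_{a_1}\mathcal{C}^{\,i-1-j}\alpha_0$, which is the asserted expression since $i-1-j=i-j-1$.

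The step I expect to be most delicate is the clean handling of $\mathcal{C}_{a_2}\alpha_0=-\alpha_0$: it rests on $\alpha_0$ being orthogonal to $\Pi_2$ and on $s_0$ commuting with each reflection making up $\mathcal{C}_2$, which is precisely what makes $\mathcal{C}_{a_2}$ an involution. Once that orthogonality is invoked, everything else is formal manipulation in $\widetilde{W}$ followed by a single substitution into the positive formula, so no appeal to the generating functions $a(t)$ and $b(t)$ or to the periodicity of the $a_i$ is needed for this lemma.
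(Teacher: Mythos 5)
Your proposal is correct and follows essentially the same route as the paper: the positive-power formula is exactly the inductive computation the paper displays just before the lemma (with the recursion $b_{s}=-\sum_{j<s}b_ja_{s-j}$), and your treatment of the negative powers via $\mathcal{C}_a^{-i}=\mathcal{C}_{a_1}\mathcal{C}_a^{i}\mathcal{C}_{a_1}$ and $\mathcal{C}_{a_2}\alpha_0=-\alpha_0$ is precisely the argument the paper relegates to the remark about ``combining the relations among $\mathcal{C}_a^i$, $\mathcal{C}_{a_1}$ and $\mathcal{C}_{a_2}$,'' only written out in full.
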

\begin{remark}
Note that the orbit ${\mathcal{C}_{a}}^{-i}\alpha_0$ ($i\in \mathbb{Z}_+$) can be obtained  immediately
by combining the relations among ${\mathcal{C}_{a}}^i$, $\mathcal{C}_{a_1}$ and $\mathcal{C}_{a_2}$.
\end{remark}

For  $k\in \mathbb{Z}_+$ let
\begin{equation*}
  \mathcal{C}_{k}=\left\{
                      \begin{array}{ll}
                        \mathcal{C}_{1}, & \hbox{if \ $k$ \ is \ odd} \\
                        \mathcal{C}_{2}, & \hbox{if \ $k$ \ is \ even}
                      \end{array}
                    \right..
\end{equation*}
Set $\mathcal{C}^{(0)}=e$ and for $k\in \mathbb{Z}_+$ let
\begin{equation}\label{c^k}
  \mathcal{C}^{(k)}=\mathcal{C}_{k}\mathcal{C}_{k-1}\cdots\mathcal{C}_{2}\mathcal{C}_{1},
\end{equation}
namely $\mathcal{C}^{(k)}$ is an alternating product of $\mathcal{C}_{1}$ and $\mathcal{C}_{2}$ with $k$ factors.
Furthermore, ${\mathcal{C}}^{(k)}\alpha_0$ $(k\in \mathbb{Z}+)$ is a root of the affine Lie algebra $\tilde{\mathfrak{g}}$
since $\alpha_0$ is a root of $\tilde{\mathfrak{g}}$.

We can express Kostant's vector $\textbf{x}_k$ in terms of $\mathcal C^{(k)}$.


\begin{theorem}\label{quo1}
Let $\textbf{x}_k$ be Kostant's vector for the affine Lie algebra $\tilde{\mathfrak{g}}$.
Then
\begin{equation}\label{e:xk}
  \textbf{x}_k=\sum_{i=0}^{\lfloor k/2\rfloor}\left(\sum_{s=0}^{i}b_s\right)
\left(\mathcal{C}^{(k-2i)}\alpha_0-\mathcal{C}^{(k-2i-1)}\alpha_0\right),
\end{equation}
where $b_s$ and $\mathcal{C}^{(k)}$ are given by the equations \eqref{b_i} and \eqref{c^k} respectively.
\end{theorem}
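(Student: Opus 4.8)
The plan is to derive the claimed formula \eqref{e:xk} directly from the orbit descriptions in Proposition \ref{orbits} together with the expansion of the powers of $\mathcal{C}_a$ supplied by Lemma \ref{powerofca}, and then to reorganize the resulting double sums into partial sums of the $b_s$. First I record two bookkeeping facts about the alternating products \eqref{c^k}: since $\mathcal{C}=\mathcal{C}_2\mathcal{C}_1$ and the factors in $\mathcal{C}^{(k)}$ alternate, one has $\mathcal{C}^{(2n)}=\mathcal{C}^{n}$ and $\mathcal{C}^{(2n+1)}=\mathcal{C}_1\mathcal{C}^{n}$ for every $n\geq 0$, so that $\mathcal{C}^{n}\alpha_0=\mathcal{C}^{(2n)}\alpha_0$ and $\mathcal{C}_1\mathcal{C}^{n}\alpha_0=\mathcal{C}^{(2n+1)}\alpha_0$; the term carrying a negative superscript in the statement is read as $\mathcal{C}^{(-1)}\alpha_0=0$. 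Throughout I abbreviate $B_i=\sum_{s=0}^{i}b_s$, where the $b_s$ are the coefficients defined in \eqref{b_i}.

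For even $k=2m$, Proposition \ref{orbits} gives $\textbf{x}_{2m}=\sum_{p=-m}^{m}{\mathcal{C}_a}^{p}\alpha_0$. Splitting the sum at $p=0$ and inserting the two formulas of Lemma \ref{powerofca}, the nonnegative powers contribute only terms $\mathcal{C}^{n}\alpha_0=\mathcal{C}^{(2n)}\alpha_0$, while the negative powers contribute only terms $\mathcal{C}_1\mathcal{C}^{n'}\alpha_0=\mathcal{C}^{(2n'+1)}\alpha_0$. Collecting the coefficient of $\mathcal{C}^{(2n)}\alpha_0$ amounts to summing $b_{p-n}$ over $p$ with $n\le p\le m$, which is exactly $B_{m-n}$; the same count shows the coefficient of $\mathcal{C}^{(2n'+1)}\alpha_0$ equals $-B_{m-n'-1}$. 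After the reindexing $n\mapsto m-i$ this is precisely the asserted identity for $k=2m$.

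For odd $k=2m+1$ I start from $\textbf{x}_{2m+1}=\sum_{j=0}^{m}{\mathcal{C}_a}^{2j-m}\textbf{x}_1$, where $\textbf{x}_1=\mathcal{C}_1\alpha_0-\alpha_0$. The dihedral relation ${\mathcal{C}_{a_1}}\mathcal{C}_a{\mathcal{C}_{a_1}}=\mathcal{C}_a^{-1}$, immediate from $\mathcal{C}_a=\mathcal{C}_{a_2}\mathcal{C}_{a_1}$ and ${\mathcal{C}_{a_1}}^2=e$, yields ${\mathcal{C}_a}^{p}\mathcal{C}_1=\mathcal{C}_1{\mathcal{C}_a}^{-p}$, so that ${\mathcal{C}_a}^{p}\textbf{x}_1=\mathcal{C}_1{\mathcal{C}_a}^{-p}\alpha_0-{\mathcal{C}_a}^{p}\alpha_0$. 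Because the exponent set $P=\{2j-m:0\le j\le m\}$ satisfies $P=-P$, the two halves recombine and the sum collapses to $\textbf{x}_{2m+1}=(\mathcal{C}_1-1)\sum_{p\in P}{\mathcal{C}_a}^{p}\alpha_0$. Expanding each ${\mathcal{C}_a}^{p}\alpha_0$ by Lemma \ref{powerofca} and applying $\mathcal{C}_1-1$ turns every summand into a multiple of $\mathcal{C}^{(2n+1)}\alpha_0-\mathcal{C}^{(2n)}\alpha_0$, and a short check shows the nonnegative and the negative powers of $\mathcal{C}_a$ both contribute with the same sign.

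The coefficient of $\mathcal{C}^{(2n+1)}\alpha_0-\mathcal{C}^{(2n)}\alpha_0$ then splits into a sum of $b_u$ coming from the nonnegative $p\in P$ and a sum of $b_v$ coming from the negative $p\in P$. The crux is the following parity matching: the index $u$ runs over $0\le u\le m-n$ of the parity of $m-n$, while $v$ runs over $0\le v\le m-n-1$ of the opposite parity, so the two ranges are disjoint and their union is exactly $\{0,1,\dots,m-n\}$. Hence the two contributions assemble into $\sum_{w=0}^{m-n}b_w=B_{m-n}$, and $n\mapsto m-i$ again reproduces the claimed formula. I expect this parity-interleaving step to be the main obstacle, since one must verify that the positive and negative powers of $\mathcal{C}_a$ furnish the $b_w$ of complementary parities, for only then do the partial sums $B_{m-n}$ emerge; the even case is comparatively routine because there the two families of powers land on disjoint basis vectors $\mathcal{C}^{(2n)}\alpha_0$ and $\mathcal{C}^{(2n+1)}\alpha_0$ and no interleaving occurs.
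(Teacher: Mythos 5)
Your proof is correct and follows essentially the same route as the paper's: both start from the orbit formulas of Proposition \ref{orbits}, expand the powers ${\mathcal{C}_{a}}^{\pm j}\alpha_0$ via Lemma \ref{powerofca}, identify $\mathcal{C}^{n}\alpha_0=\mathcal{C}^{(2n)}\alpha_0$ and $\mathcal{C}_1\mathcal{C}^{n}\alpha_0=\mathcal{C}^{(2n+1)}\alpha_0$, and collect coefficients into the partial sums $\sum_{s=0}^{i}b_s$. For the odd case, which the paper dismisses as ``similar,'' your reduction of $\sum_{j}{\mathcal{C}_{a}}^{2j-m}\textbf{x}_1$ to $(\mathcal{C}_1-1)\sum_{p\in P}{\mathcal{C}_{a}}^{p}\alpha_0$ via the dihedral relation $\mathcal{C}_{a_1}\mathcal{C}_{a}\mathcal{C}_{a_1}={\mathcal{C}_{a}}^{-1}$ and the ensuing parity-interleaving count are a valid, fully written-out way to complete that half.
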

\begin{proof} The proof is divided into two similar cases according the parity of $k$.
We assume $k=2m$ first. If follows from \eqref{xk1} and Lemma \ref{powerofca} that
\begin{align*}
  \textbf{x}_k
   & =\alpha_0+\sum_{j=1}^{m}\left({\mathcal{C}_{a}}^{j}\alpha_0+{\mathcal{C}_{a}}^{-j}\alpha_0\right)\\
   & =\alpha_0+\sum_{j=1}^{m}\left[b_m\alpha_0+\sum_{s=0}^{j-1}b_s\left({\mathcal{C}}^{j-s}\alpha_0-
\mathcal{C}_{a_1}{\mathcal{C}}^{j-s-1}\alpha_0\right)\right]\\
 & =\alpha_0+\sum_{j=1}^{m}\left[b_m\alpha_0+\sum_{s=0}^{j-1}b_s\left({\mathcal{C}}^{(2j-2s)}\alpha_0-
 {\mathcal{C}}^{(2j-2s-1)}\alpha_0\right)\right]\\
& =\alpha_0+\sum_{j=1}^{m}\sum_{s=0}^{j}b_s\left(\mathcal{C}^{(2j-2s)}\alpha_0-\mathcal{C}^{(2j-2s-1)}\alpha_0\right)\\
& =\sum_{j=0}^{m}\sum_{s=0}^{j}b_s\left(\mathcal{C}^{(2j-2s)}\alpha_0-\mathcal{C}^{(2j-2s-1)}\alpha_0\right)\\
& =\sum_{i=0}^{m}\left(\sum_{s=0}^{i}b_s\right)\left(\mathcal{C}^{(k-2i)}\alpha_0-\mathcal{C}^{(k-2i-1)}\alpha_0\right).
\end{align*}
Set $i=m-j+s$ then $2j-2s=k-2i$, replacing $i$ with $j$ gives the last equality. The case of $k=2m+1$ is treated similarly.
\end{proof}

Kostant-type generating function $\textbf{X}(t)$ in \eqref{powerseries} can then be computed by Theorem \ref{quo1}.
In fact, using \eqref{e:xk} we have
\begin{equation}\label{e:xt2}
  \textbf{X}(t)=\left(\sum_{i=0}^{\infty}\sum_{s=0}^{i}b_st^{2i}\right)
\left(\sum_{j=0}^{\infty}\left(\mathcal{C}^{(j)}\alpha_0-\mathcal{C}^{(j-1)}\alpha_0\right)t^j\right).
\end{equation}
Next we will see that $\textbf{X}(t)$ can be explicitly expressed as a rational function for all affine Lie algebras,
which generalizes Kostant's result for simply laced types.

Let $g=h/2\in \mathbb{Z}_+$, $h$ is the Coxeter number of $\mathfrak{g}$ (excluding type $A_{2n}$).
Steinberg \cite{Ste1} had shown that $\mathcal{C}^g\in W$ is the longest element of the Weyl group,
which takes the positive roots of $\mathfrak{g}$ to the negative roots, then $\mathcal{C}^g\psi=-\psi$ for the
highest positive root $\psi$.
Note that $(\mathcal{C}^g)^2=e$.
Since $\alpha_0+\psi\in \tilde{\mathfrak{h}}'$ is fixed under the action of $\widetilde{W}$, we have
\begin{center}
$\mathcal{C}^{(h)}\alpha_0=\mathcal{C}^g\alpha_0=\mathcal{C}^g[(\alpha_0+\psi)-\psi]=\alpha_0+2\psi$.
\end{center}
Note that $\alpha_0$ is orthogonal to all elements of $\Pi_2$,
\begin{equation}\label{c^{(h-1)}}
  \mathcal{C}^{(h-1)}\alpha_0=\mathcal{C}_2\mathcal{C}^{(h)}\alpha_0=\mathcal{C}_2(\alpha_0+2\psi)
=\mathcal{C}_2[2(\alpha_0+\psi)-\alpha_0]=\alpha_0+2\psi.
\end{equation}
Subsequently, $\mathcal{C}^{(0)}\alpha_0+\mathcal{C}^{(h)}\alpha_0=2(\alpha_0+\psi)$. It follows from
the definition of $\mathcal{C}^{(i)}$ that for
 $i\geq 1$
\begin{equation}\label{c^{(i)}}
  \mathcal{C}^{(i)}\alpha_0+\mathcal{C}^{(i+h)}\alpha_0=2(\alpha_0+\psi).
\end{equation}

Since $\widetilde{W}$ fixes $\alpha_0+\psi$, and also $\alpha_0=(\alpha_0+\psi)-\psi$, we see that
$a_i$ can also be rewritten as (see \eqref{a_i})
\begin{equation*}
  a_i=\frac{2(\mathcal{C}^i\psi,\psi)}{(\psi,\psi)}, \ \ i\geq 1.
\end{equation*}
Recall that $\mathcal{C}^g\psi=-\psi$, then
\begin{equation}
a_i+a_{i+g}=0
\end{equation}
for $i\geq1$. Note that $a_0+a_g=-1$. This is because $a_g=-2$ and $a_0=1$ by the convention.

In order to provide a uniform  Kostant's generating function,
we relabel the first $g-1$ terms of $a_i$ as
\begin{equation}\label{def1}
  c_0=a_0=1, \ \ {\rm and} \ \ c_i=a_i=\frac{2(\mathcal{C}^i\psi,\psi)}{(\psi,\psi)}\ \ (for \ i=1,\ldots, g-1),
\end{equation}
in addition, define $c_g=-1$.
Then the above derivation has shown the following result.

\begin{theorem}
Let $\textbf{x}_k$ be one of Kostant's vectors  corresponding to the decomposition of the $k$-th irreducible
$\mathrm{SL}_2(\mathbb{C})$-module restricted to the corresponding group. Then Kostant-type generating function
$\textbf{X}(t)=\sum_{k=0}^{\infty}\textbf{x}_kt^k$ is given by
\begin{equation*}
  \textbf{X}(t)=\frac{\sum\limits_{j=0}^{h}\left(\mathcal{C}^{(j)}\alpha_0-\mathcal{C}^{(j-1)}\alpha_0\right)t^j}
{\sum\limits_{i=0}^{g+1}(c_i-c_{i-1})t^{2i}},
\end{equation*}
where $\mathcal{C}^{(j)}$ and $c_i$ are defined in \eqref{c^k} and \eqref{def1}, $\alpha_0$ is
the affine vertex of the affine Lie algebra $\tilde{\mathfrak g}$.
\end{theorem}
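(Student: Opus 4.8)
The plan is to start from the already-factored expression \eqref{e:xt2}, which writes $\textbf{X}(t)$ as a product of a scalar power series $A(t)=\sum_{i=0}^{\infty}\bigl(\sum_{s=0}^{i}b_s\bigr)t^{2i}$ and a vector-valued power series $B(t)=\sum_{j=0}^{\infty}\bigl(\mathcal{C}^{(j)}\alpha_0-\mathcal{C}^{(j-1)}\alpha_0\bigr)t^j$. I would evaluate each factor separately as a rational function and then multiply, arranging the bookkeeping so that a common factor cancels and the two rational functions collapse to the claimed quotient.

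For the vector factor $B(t)$, the key input is the anti-periodicity encoded in \eqref{c^{(i)}}: since $\mathcal{C}^{(i)}\alpha_0+\mathcal{C}^{(i+h)}\alpha_0=2(\alpha_0+\psi)$ for $i\ge 0$, the difference sequence $v_j:=\mathcal{C}^{(j)}\alpha_0-\mathcal{C}^{(j-1)}\alpha_0$ satisfies $v_{j+h}=-v_j$ for $j\ge 1$. Splitting $B(t)$ into blocks of length $h$ and summing the resulting geometric series in $-t^h$ then yields a rational function whose denominator is $1+t^h$ and whose numerator is the finite polynomial $\sum_{j=0}^{h}\bigl(\mathcal{C}^{(j)}\alpha_0-\mathcal{C}^{(j-1)}\alpha_0\bigr)t^j$ appearing in the statement; here I would use $\mathcal{C}^{(h)}\alpha_0=\mathcal{C}^{(h-1)}\alpha_0=\alpha_0+2\psi$ from \eqref{c^{(h-1)}} to pin down the top-degree term, which is the delicate part of this step.

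For the scalar factor $A(t)$, I would invoke \eqref{inverse}, that is $b(t)=1/a(t)$, together with the explicit rational form of $a(t)$ coming from the periodicity of the $a_i$. Since $A(t)$ is the generating function of the partial sums $\sum_{s\le i}b_s$ evaluated at $t^2$, one has $A(t)=b(t^2)/(1-t^2)$, which I would rewrite using $\tfrac{1-t^{2h}}{1-t^2}=\sum_{i=0}^{h-1}t^{2i}$. The crucial simplification then exploits the relations $a_i+a_{i+g}=0$ for $i\ge 1$ and $a_0+a_g=-1$ (with $a_0=1$, $a_g=-2$) to split the denominator of $A(t)$ into its lower and upper halves and factor out $1-t^h$; after passing to the relabelled coefficients $c_i$ of \eqref{def1}, this should produce $A(t)=(1+t^h)\big/\sum_{i=0}^{g+1}(c_i-c_{i-1})t^{2i}$, with the telescoping identity $\sum_{i=0}^{g+1}(c_i-c_{i-1})t^{2i}=(1-t^2)\sum_{i=0}^{g}c_it^{2i}$ keeping the algebra manageable.

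Finally I would multiply $A(t)B(t)$: the factor $1+t^h$ in the numerator of $A(t)$ cancels the denominator of $B(t)$, leaving exactly the quotient in the statement. The main obstacle is the third step's denominator bookkeeping—tracking the boundary terms $a_0=1$ and $a_g=-2$ through the half-splitting so that the factor $1-t^h$ emerges cleanly and the reduced denominator is precisely $\sum_{i=0}^{g+1}(c_i-c_{i-1})t^{2i}$. This is also where the exclusion of type $A_{2n}$ enters, since it guarantees $g=h/2\in\mathbb{Z}_+$, so that the halving of the index range and the substitution $t\mapsto t^2$ are legitimate.
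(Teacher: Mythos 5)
Your proposal is correct and follows essentially the same route as the paper: both start from \eqref{e:xt2}, invoke $a(t)b(t)=1$ together with the (anti)periodicity relations $\mathcal{C}^{(j)}\alpha_0+\mathcal{C}^{(j+h)}\alpha_0=2(\alpha_0+\psi)$ and $a_i+a_{i+g}=0$, and clear the infinite series by a factor of $1+t^h$, the only difference being that you rationalize the two factors separately and cancel $1+t^h$ while the paper combines everything into a single fraction first. The top-degree term you flag as delicate is indeed the one subtle point: the $t^h$-coefficient of the numerator comes out as $\alpha_0$ (i.e.\ equal to $z_0$, consistent with the symmetry $z_k=z_{h-k}$), not the literal value $\mathcal{C}^{(h)}\alpha_0-\mathcal{C}^{(h-1)}\alpha_0=0$, so the displayed numerator must be read with that convention.
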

\begin{proof} It follows from \eqref{e:xt2} that
\begin{align*}\label{x(t)}
   \textbf{X}(t)
       &=\left(\sum_{i=0}^{\infty}\left(\sum_{s=0}^{i}b_s\right)t^{2i}\right)\left(\sum_{j=0}^{\infty}
\left(\mathcal{C}^{(j)}\alpha_0-\mathcal{C}^{(j-1)}\alpha_0\right)t^{j}\right)\\
       &=\left(\frac{1}{1-t^2}\sum_{i=0}^{\infty}b_it^{2i}\right)\left((1-t)\sum_{j=0}^{\infty}\mathcal{C}^{(j)}\alpha_0t^j\right)\\
       &=\frac{(1-t)\sum\limits_{j=0}^{\infty}\mathcal{C}^{(j)}\alpha_0t^j}{(1-t^2)\left(\sum\limits_{i=0}^{\infty}a_it^{2i}\right)}\\
\end{align*}
To simplify the expression, we multiply both numerator and denominator by $1+t^h$ to get
\begin{align*}
   \textbf{X}(t)   &=\frac{(1-t)\left(\sum\limits_{j=0}^{h-1}\mathcal{C}^{(j)}\alpha_0t^j+\sum\limits_{j=0}^{\infty}(\mathcal{C}^{(j)}\alpha_0+\mathcal{C}^{(j+h)}\alpha_0)t^{j+h}\right)}
{(1-t^2)\left(\sum\limits_{i=0}^{g-1}a_it^{2i}+\sum\limits_{i=0}^{\infty}(a_i+a_{i+g})t^{i+h}\right)}\\
       &=\frac{(1-t)\left(\sum\limits_{j=0}^{h-1}\mathcal{C}^{(j)}\alpha_0t^j+\frac{2(\alpha_0+\psi)t^h}{1-t}\right)}
{(1-t^2)\left(\sum\limits_{i=0}^{g-1}a_it^{2i}-t^{h}\right)}\\
 &=\frac{\sum\limits_{j=0}^{h-1}\left(\mathcal{C}^{(j)}\alpha_0-\mathcal{C}^{(j-1)}\alpha_0\right)t^j-\mathcal{C}^{(h-1)}\alpha_0t^h+2(\alpha_0+\psi)t^h}
{(1-t^2)\left(\sum\limits_{i=0}^{g}c_it^{2i}\right)}\\
&=\frac{\sum\limits_{j=0}^{h}\left(\mathcal{C}^{(j)}\alpha_0-\mathcal{C}^{(j-1)}\alpha_0\right)t^j}
{\sum\limits_{i=0}^{g+1}(c_i-c_{i-1})t^{2i}}.
\end{align*}
\end{proof}

For $i=0,1,\dots,h$, denote
\begin{equation*}
   z_i=\mathcal{C}^{(i)}\alpha_0-\mathcal{C}^{(i-1)}\alpha_0,
\end{equation*}
and put
\begin{equation*}
   z(t)=\sum_{i=0}^hz_it^i.
\end{equation*}

Then for any affine Lie algebra, we have the following result.

\begin{prop}\label{prop}
Let $N \lhd G\leq \mathrm{SL}_2(\mathbb{C})$ be the pair associated with the affine Lie algebra
$\tilde{\mathfrak g}$. 
For $k=1,\dots,h-1$, the root $\mathcal{C}^{(k)}\psi$ are distinct and simply ordered with respect
to the $\mathbb{Z}_+$-cone spanned by $\Delta_+$ with $\mathcal{C}^{(0)}\psi=\psi$ the highest
and $\mathcal{C}^{(h-1)}\psi=-\psi$ the lowest, moreover, there is
$z_k\neq 0$ and
\begin{equation*}
  z_k=z_{h-k}.
\end{equation*}
Furthermore there exists $i_{\star}\in P_j$, $j=1,2$ with $j \equiv g$  {\rm mod} $2$ such that
\begin{equation*}
  \mathcal{C}^{(g-1)}\psi=\alpha_{i_{\star}}\ \  {\rm and} \ \ z_g=2\alpha_{i_{\star}},
\end{equation*}
where $\alpha_{i_{\star}}$ is the root corresponding to the unique long root connected to the short root with a
multiply laced (resp. the branch point).
\end{prop}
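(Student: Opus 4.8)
The plan is to push the whole statement from the affine root $\alpha_0$ down to the highest root $\psi$ of the finite system, where it becomes an assertion about the orbit of $\psi$ under the alternating product $\mathcal{C}^{(k)}$. Set $\delta=\alpha_0+\psi$. Since every $\mathcal{C}^{(k)}$ lies in the finite Weyl group $W$, which fixes $\delta$, we have $\mathcal{C}^{(k)}\alpha_0=\delta-\mathcal{C}^{(k)}\psi$, so that with $\gamma_k:=\mathcal{C}^{(k)}\psi$
\[
z_k=\mathcal{C}^{(k)}\alpha_0-\mathcal{C}^{(k-1)}\alpha_0=\gamma_{k-1}-\gamma_k .
\]
The endpoints are $\gamma_0=\psi$ and, by \eqref{c^{(h-1)}}, $\gamma_{h-1}=-\psi$. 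Thus the ordering claim, the non-vanishing $z_k\neq0$, and the symmetry $z_k=z_{h-k}$ all reduce to properties of the finite chain $\psi=\gamma_0,\gamma_1,\dots,\gamma_{h-1}=-\psi$, while the last assertion becomes the computation of the central link $\gamma_{g-1}$.

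First I would isolate the one input from the bipartite labelling: because $\mathcal{C}_{a_2}=s_0\mathcal{C}_2=\mathcal{C}_2 s_0$, the reflection $s_0$ commutes with $\mathcal{C}_2$, which is to say $\alpha_0\perp\Pi_2$; as $\alpha_0=\delta-\psi$ this gives $\langle\psi,\alpha_r^{\vee}\rangle=0$ for $r\in P_2$, i.e. $\mathcal{C}_2\psi=\psi$. Using $\mathcal{C}^{(2m)}=\mathcal{C}^m$, $\mathcal{C}^{(2m+1)}=\mathcal{C}_1\mathcal{C}^m$, together with Steinberg's identity $\mathcal{C}^{g}\psi=-\psi$ \cite{Ste1} and the word identity $(\mathcal{C}_1\mathcal{C}_2)^{m+1}=\mathcal{C}_1(\mathcal{C}_2\mathcal{C}_1)^m\mathcal{C}_2$, the relation $\mathcal{C}_2\psi=\psi$ collapses this to $\mathcal{C}^{-(m+1)}\psi=\mathcal{C}_1\mathcal{C}^m\psi$; feeding it into $\mathcal{C}^{(h-1-j)}\psi$ for both parities of $j$ yields the negation symmetry $\gamma_{h-1-j}=-\gamma_j$. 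This immediately gives $z_k=z_{h-k}$ and, taking $j=g-1$, the crucial identity $\gamma_g=-\gamma_{g-1}$.

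The core is the strict descent $\gamma_0>\gamma_1>\cdots>\gamma_{h-1}$ in the $\mathbb{Z}_+$-order on $\Delta_+$. For $1\le k\le g-1$ each step is a single orthogonal involution, so $\gamma_{k-1}-\gamma_k=\sum_{r\in P_j}\langle\gamma_{k-1},\alpha_r^{\vee}\rangle\alpha_r$ with $j\equiv k\bmod 2$, and it suffices to show $\gamma_{k-1}$ pairs nonnegatively with every $\alpha_r^{\vee}$, $r\in P_j$, and strictly positively with at least one. I would prove this sign-coherence by induction on $k$, following Kostant's Lemma~4.2--4.3 \cite{Kos2}, the new feature being that unequal root lengths may produce pairings larger than $1$; the second half $g\le k\le h-1$ is then free from the negation symmetry of the previous paragraph. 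This yields distinctness, the simple ordering with top $\psi$ and bottom $-\psi$, and $z_k\neq0$. For the final assertion, $\gamma_{g-1}$ lies in the positive cone since $z_g=2\gamma_{g-1}$ does, while $\mathcal{C}_j\gamma_{g-1}=\gamma_g=-\gamma_{g-1}$ is negative; as $\mathcal{C}_j$ carries $\Delta_+\setminus\{\alpha_r:r\in P_j\}$ into $\Delta_+$, necessarily $\gamma_{g-1}=\alpha_{i_\star}$ for some $i_\star\in P_j$, whence $\mathcal{C}_j\alpha_{i_\star}=-\alpha_{i_\star}$ (the remaining reflections being orthogonal) gives $z_g=2\alpha_{i_\star}$.

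I expect the genuine obstacles to be two. The first is the inductive sign-coherence step underlying the descent, which must be run uniformly across all twisted and multiply laced types where the larger pairings $\langle\gamma_{k-1},\alpha_r^{\vee}\rangle$ have to be controlled. The second is the explicit determination of $i_\star$, where the uniform statement ``long root attached to the short root (resp.\ the branch point)'' has to be matched against each diagram produced by the McKay--Slodowy correspondence.
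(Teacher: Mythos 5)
The paper states this proposition without giving any proof, so there is no in-text argument to compare against; your reduction is the natural adaptation of Kostant's route, and most of it is carried out correctly. In particular, the identities $z_k=\mathcal{C}^{(k-1)}\psi-\mathcal{C}^{(k)}\psi$, the negation symmetry $\mathcal{C}^{(h-1-j)}\psi=-\mathcal{C}^{(j)}\psi$ (your use of $\mathcal{C}_2\psi=\psi$ and $\mathcal{C}^{-(m+1)}=\mathcal{C}_1\mathcal{C}^m\mathcal{C}_2$ is correct), the consequences $z_k=z_{h-k}$ and $z_g=2\,\mathcal{C}^{(g-1)}\psi$, and the inversion-set argument identifying $\mathcal{C}^{(g-1)}\psi$ with a simple root $\alpha_{i_\star}$, $i_\star\in P_j$, are all sound, granted the descent.

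The genuine gap is exactly the step you flag and then defer: the sign-coherence claim that $\langle\mathcal{C}^{(k-1)}\psi,\alpha_r^{\vee}\rangle\geq 0$ for all $r\in P_j$ ($j\equiv k\bmod 2$), with strict inequality for at least one $r$, for $1\leq k\leq g$. Everything the proposition actually asserts beyond symmetry --- distinctness, the simple ordering from $\psi$ down to $-\psi$, and $z_k\neq 0$ --- rests on this, and "induction on $k$ following Kostant's Lemma 4.2--4.3" is not a proof in the present setting: in the multiply laced and twisted diagrams produced by the McKay--Slodowy correspondence the pairings $\langle\cdot,\alpha_r^{\vee}\rangle$ can equal $2$ or $3$, the roots $\mathcal{C}^{(k)}\psi$ change length along the chain, and one must also verify that the chain has length exactly $h$ (i.e.\ that it does not reach $-\psi$ early or stall), none of which is automatic from the simply laced argument. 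You would need either to run the induction explicitly with the length data controlled (e.g.\ by tracking $\langle\mathcal{C}^{(k)}\psi,\mathcal{C}^{(k)}\psi\rangle$ and using that the bipartite Coxeter element acts on the plane spanned by the two "Perron--Frobenius" eigenvectors), or to reduce the twisted cases to the simply laced ones via folding. The same incompleteness affects the final clause: you correctly derive $\mathcal{C}^{(g-1)}\psi=\alpha_{i_\star}$ for some $i_\star\in P_j$, but the assertion that $\alpha_{i_\star}$ is the long root adjacent to the short root (resp.\ the branch point) is left entirely unverified, and since the proposition makes that identification part of its statement, the case check against each diagram cannot be omitted. Note also that the whole argument presupposes $g=h/2\in\mathbb{Z}$, which excludes the $A_{2n}^{(2)}$ (odd cyclic) case; this restriction should be made explicit.
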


\begin{prop}\label{SL2}
Let $N \unlhd G\leq \mathrm{SL}_2(\mathbb{C})$ be a pair of subgroups.
Let $\check{\rho}_i$ be the $N$-restriction module of the irreducible $G$-module ${\rho}_i$,
$\phi_j$ an irreducible $N$-module, $\Upsilon$ the set of a representative of conjugacy classes of $N$.
Let
\begin{equation*}
  h=\sum_{i\in \Upsilon\cap N}{\rm dim}\check{\rho}_i,
\end{equation*}
which is the Coxeter number
of the finite  dimensional Lie algebra $\mathfrak g$ obtained from the affine Dynkin diagram by removing the
special affine vertex.
Define
\begin{align*}
a&=2{\rm max}\{ {\rm dim} \phi_i|i\in \Upsilon \}\\
b&=h+2-a.
\end{align*}
If $N \lhd G\leq \mathrm{SL}_2(\mathbb{C})$, then
\begin{equation*}
  ab=2\left|N\right|.
\end{equation*}
In particular, if $N=G\leq \mathrm{SL}_2(\mathbb{C})$, then
\begin{equation*}
  ab=2\left|G\right|.
\end{equation*}
In addition, one has
\begin{equation*}
  \sum\limits_{i=0}^{g+1}(c_i-c_{i-1})t^{2i}=(1-t^a)(1-t^b).
\end{equation*}
Furthermore, Kostant-type generating function can be expressed as
\begin{equation}\label{x(t)1}
    X(t)=\frac{z(t)}{(1-t^a)(1-t^b)}=\frac{\sum\limits_{i=0}^hz_it^i}{(1-t^a)(1-t^b)}.
\end{equation}
\end{prop}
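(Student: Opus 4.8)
The plan is to derive all four assertions from a single fact about the integer sequence $(c_i)$, namely that it is the step sequence attached to the pair $(a,b)$. By the preceding Theorem and the definition of $z(t)$ we already have
$$\textbf{X}(t)=\frac{z(t)}{\sum_{i=0}^{g+1}(c_i-c_{i-1})t^{2i}},\qquad z(t)=\sum_{i=0}^{h}z_it^i,$$
so the rational expression \eqref{x(t)1} will follow the moment the denominator is identified with $(1-t^a)(1-t^b)$. A one-line telescoping gives $\sum_{i=0}^{g+1}(c_i-c_{i-1})t^{2i}=(1-t^2)\sum_{i=0}^{g}c_it^{2i}$ with the convention $c_{-1}=c_{g+1}=0$, and since $a+b=h+2=2(g+1)$ by the definition of $b$ while $a,b$ are both even, setting $s=t^2$ makes the denominator identity equivalent to
$$\sum_{i=0}^{g}c_is^i=\frac{(1-s^{a/2})(1-s^{b/2})}{1-s}.$$
Expanding the right-hand side shows its coefficient sequence is $+1$ up to $\min(a/2,b/2)-1$, then $0$, then $-1$ from $\max(a/2,b/2)$ on; thus everything reduces to proving that $(c_i)_{0\le i\le g}$ has exactly this shape, with its two descents located at $a/2$ and $b/2$.

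To pin down the shape I would first recall from the relabelling \eqref{def1} that $c_0=1$, $c_g=-1$, and $c_i=\langle\mathcal{C}^i\psi,\psi^\vee\rangle$ for $1\le i\le g-1$, where $\psi$ is the highest (hence long) root of $\mathfrak{g}$. Because $\psi$ is the highest root, $\langle\beta,\psi^\vee\rangle\in\{-1,0,1\}$ for every root $\beta\ne\pm\psi$, and $\mathcal{C}^i\psi\ne\pm\psi$ for $1\le i\le g-1$ since $\mathcal{C}$ has order $h=2g$ with $\mathcal{C}^g\psi=-\psi$; hence $c_i\in\{-1,0,1\}$. The identity $(\mathcal{C}^{g-i}\psi,\psi)=(\mathcal{C}^g\psi,\mathcal{C}^i\psi)=-(\mathcal{C}^i\psi,\psi)$ yields the antisymmetry $c_{g-i}=-c_i$, so the two descents are symmetric about $(g+1)/2$ and it suffices to locate the first. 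For this I would use $\mathcal{C}^i\psi=\mathcal{C}^{(2i)}\psi$ and track the orbit $\{\mathcal{C}^{(k)}\psi\}$ of Proposition \ref{prop}, which descends monotonically from $\psi$ to $-\psi$ and passes through the distinguished root $\mathcal{C}^{(g-1)}\psi=\alpha_{i_\star}$; reading off where $\mathcal{C}^{(2i)}\psi$ first becomes orthogonal to $\psi$ should locate the descent. The substantive point, and the main obstacle, is to identify this crossing index with the representation-theoretic quantity $\tfrac{a}{2}=\max_i\dim\phi_i$. In the split case $N=G$ this is the top Kac label, but for $N\lhd G$ the value $\max_i\dim\phi_i$ is read off from the $N$-side of the McKay--Slodowy dictionary rather than directly from the affine labels, so the identification must be routed through Lemma \ref{dimandcoefficients} and the explicit realization of $\mathfrak{g}$ from the pair $(N,G)$; I expect to carry it out uniformly from Proposition \ref{prop} together with a short inspection of the finite list of admissible pairs.

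Granting the factorization, the order relation $ab=2|N|$ comes from Molien's formula. The $\alpha_0$-component of $\textbf{X}(t)$ is, by Frobenius reciprocity, the Poincar\'e series $P_N(t)=\sum_k\dim\big(S^k(V)^{N}\big)t^k$ of the $N$-invariants. On one hand Molien gives $\lim_{t\to1}(1-t)^2P_N(t)=1/|N|$, because every $\gamma\ne1$ in $N\subset\mathrm{SL}_2(\mathbb{C})$ is semisimple with both eigenvalues $\ne1$ and contributes no pole at $t=1$, so only $\gamma=1$ produces a double pole. On the other hand, $z_0=\textbf{x}_0=\alpha_0$ and, by the symmetry $z_k=z_{h-k}$ of Proposition \ref{prop}, $z_h=\alpha_0$, while each intermediate $z_k=\mathcal{C}^{(k-1)}\psi-\mathcal{C}^{(k)}\psi$ lies in the finite root lattice $\mathrm{span}(\alpha_1,\dots,\alpha_l)$; hence the $\alpha_0$-component of the numerator is exactly $1+t^h$ and $P_N(t)=\dfrac{1+t^h}{(1-t^a)(1-t^b)}$, whose double pole gives $\lim_{t\to1}(1-t)^2P_N(t)=\dfrac{2}{ab}$. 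Comparing the two limits forces $ab=2|N|$, and $N=G$ is the special case $\check\rho_i=\hat\phi_i=\rho_i$, giving $ab=2|G|$. Finally, inserting the factored denominator into the displayed formula from the preceding Theorem yields $\textbf{X}(t)=z(t)/\big((1-t^a)(1-t^b)\big)$ with $z(t)=\sum_{i=0}^hz_it^i$, which is \eqref{x(t)1} and completes the proof.
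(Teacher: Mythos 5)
The paper itself states this proposition without proof, deferring implicitly to \cite[Thm.~3.6]{JWZ2} and to Kostant's original argument, so there is no in-paper proof to compare against; I will therefore assess your argument on its own terms. Most of your scaffolding is sound and nicely organized: the telescoping identity $\sum_{i=0}^{g+1}(c_i-c_{i-1})t^{2i}=(1-t^2)\sum_{i=0}^{g}c_it^{2i}$, the reduction of the factorization to the claim that $(c_i)_{0\le i\le g}$ is a block of $+1$'s, then $0$'s, then $-1$'s with descents at $a/2$ and $b/2$, the observations $c_i\in\{-1,0,1\}$ and $c_{g-i}=-c_i$, and the identification of the $\alpha_0$-component of $z(t)$ with $1+t^h$ are all correct. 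Your Molien-series derivation of $ab=2|N|$ (only $\gamma=1$ contributes a double pole at $t=1$, so $\lim_{t\to1}(1-t)^2P_N(t)=1/|N|=2/(ab)$) is a clean and genuinely attractive way to get the order relation once the factorization is in hand. You could even close the ``shape'' part of the argument without case analysis: since Proposition \ref{prop} gives that the orbit $\mathcal{C}^{(k)}\psi$ is simply ordered in the $\mathbb{Z}_+$-cone and $\psi$ is dominant, $c_i-c_{i+1}=\langle\mathcal{C}^{(2i)}\psi-\mathcal{C}^{(2i+2)}\psi,\psi^{\vee}\rangle\ge0$, so $(c_i)$ is weakly decreasing, which together with $c_i\in\{-1,0,1\}$ and the antisymmetry forces exactly the block shape you need.

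The genuine gap is the step you yourself flag and then defer: identifying the location of the first descent of $(c_i)$ — equivalently, the number of indices $i$ with $(\mathcal{C}^i\psi,\psi)>0$ — with the representation-theoretic quantity $a/2=\max_i\dim\phi_i$ read off from the group $N$. This is not a routine afterthought; it is the entire content of the proposition, and it is precisely where the McKay--Slodowy dictionary (Lemma \ref{dimandcoefficients} relating Kac labels to dimensions, or a case-by-case computation of Coxeter orbits of $\psi$ for each admissible pair $N\lhd G$) must actually be invoked. As written, ``I expect to carry it out uniformly \dots together with a short inspection of the finite list'' leaves the factorization $\sum_{i=0}^{g+1}(c_i-c_{i-1})t^{2i}=(1-t^a)(1-t^b)$ unproved, and with it both \eqref{x(t)1} and $ab=2|N|$ (your Molien argument pins down the product of the two descent positions, but not that either of them equals $a/2$ as defined in the statement). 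A secondary point worth a sentence in a final write-up: the literal definition $z_h=\mathcal{C}^{(h)}\alpha_0-\mathcal{C}^{(h-1)}\alpha_0$ gives $0$ by \eqref{c^{(h-1)}}, so your appeal to $z_h=z_0=\alpha_0$ needs the convention actually used in the proof of the preceding theorem (where the $t^h$-coefficient is $2(\alpha_0+\psi)-\mathcal{C}^{(h-1)}\alpha_0=\alpha_0$) rather than an extension of the symmetry $z_k=z_{h-k}$, which Proposition \ref{prop} only asserts for $1\le k\le h-1$.
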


\section{The unified Poincar\'{e} series for the Coxeter element on the root system associated to $N \unlhd G\leq \mathrm{SL}_2(\mathbb{C})$}

Let $N \unlhd G\leq \mathrm{SL}_2(\mathbb{C})$ and $V=\mathbb C^2$,
then the $i$-th component $X(t)_i$ of Kostant's generating function $X(t)$
is one of the Poincar\'{e} series for the multiplicities of the individual module $\check{\rho}_i$
(resp. $\hat{\phi}_i$ or $\rho_i$) in the symmetric algebra
$S(V)=\bigoplus\limits_{k \geq 0}S^{k}(V)$ respectively.
Formula \eqref{x(t)1} in Proposition \ref{SL2} implies that considering the Poincar\'{e} series $X(t)_i$ for the restriction
of the module $\check{\rho}_i$
(resp. $\hat{\phi}_i$ or $\rho_i$) is simply given by only
the $i$-th component $z(t)_i$ of the polynomial $z(t)$, or the coefficient of $\alpha_i$.

Let $a, b, h$ be given conceptually as in Proposition \ref{SL2},
in \cite[Thm.3.6]{JWZ2}. Then we have the generalized Kostant-type formula \cite{Kos,Kos2} for the Poincar\'{e} series
of the symmetric invariants
\begin{equation*}
  X(t)_0=\frac{1+t^h}{(1-t^a)(1-t^b)}
\end{equation*}
in terms of the McKay-Slodowy correspondence. We have also shown that
it is a uniform formula
for $N \unlhd G\leq \mathrm{SL}_2(\mathbb{C})$ (resp. $G\leq \mathrm{SL}_2(\mathbb{C})$ except for $G$ being the cyclic group of odd order)
which correspond to a pair of multiply (resp. simply) laced affine Dynkin diagrams.

In the next theorem, we will generalize Kostant's result of the Poincar\'{e} series for the individual
module $\rho_i$ of $G\leq \mathrm{SL}_2(\mathbb{C})$ to the $N$-restriction module $\check{\rho}_i$ or the
$G$-induced module $\hat{\phi}_i$ for $N \unlhd G\leq \mathrm{SL}_2(\mathbb{C})$.
In order words, there is also a unified Poincar\'{e} series for the Coxeter element for the root system
associated to $N \unlhd G\leq \mathrm{SL}_2(\mathbb{C})$.


\begin{theorem}
Let $N \unlhd G\leq \mathrm{SL}_2(\mathbb{C})$ and $V=\mathbb C^2$. Then
the Poincar\'{e} series for the multiplicities of the module $\check{\rho}_i$
(resp. $\hat{\phi}_i$ or $\rho_i$) in the symmetric algebra
$S(V)=\bigoplus\limits_{k \geq 0}S^{k}(V)$ is given by
\begin{equation*}
  X(t)_i=\frac{z(t)_i}{(1-t^a)(1-t^b)}.
\end{equation*}
\end{theorem}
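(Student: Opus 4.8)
The plan is to recognize that this theorem is the componentwise transcription of the vector identity already established in Proposition \ref{SL2}, so the entire argument reduces to unwinding definitions. First I would recall that by the defining equations \eqref{vectors}, each Kostant vector expands in the simple-root basis as $\check{\textbf{s}}_k=\sum_{j\in\check{\mathrm I}}\check{s}^{j}_{k}\alpha_j$ (and analogously $\hat{\textbf{s}}_k=\sum_{j}\hat{s}^{j}_{k}\alpha_j$, or $\textbf{s}_k=\sum_j s^j_k\alpha_j$ when $N=G$). Hence the coefficient of $\alpha_i$ in $\textbf{x}_k$ is exactly the multiplicity $\check{s}^{i}_{k}$ (resp. $\hat{s}^{i}_{k}$ or $s^i_k$) of the module attached to vertex $i$ in $S^k(V)$, by \eqref{restrict}.

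Next I would pass to generating functions. Because the sum defining $\textbf{X}(t)=\sum_{k\geq 0}\textbf{x}_kt^k$ in \eqref{powerseries} is taken coefficientwise in the simple-root basis, the $i$-th component of $\textbf{X}(t)$ is
\begin{equation*}
  X(t)_i=\sum_{k\geq 0}\bigl(\textbf{x}_k\bigr)_i\,t^k=\sum_{k\geq 0}\check{s}^{i}_{k}t^k,
\end{equation*}
which is precisely the Poincar\'e series recording the multiplicity of $\check{\rho}_i$ (resp. $\hat{\phi}_i$ or $\rho_i$) across all symmetric powers, i.e. its multiplicity in the full symmetric algebra $S(V)=\bigoplus_{k\geq 0}S^k(V)$.

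Finally I would invoke the rational form from Proposition \ref{SL2}, namely $X(t)=z(t)/\bigl((1-t^a)(1-t^b)\bigr)$ with $z(t)=\sum_{j=0}^h z_jt^j$. The essential point is that the denominator $(1-t^a)(1-t^b)$ is a \emph{scalar} power series, while $X(t)$ and $z(t)$ are vector-valued; consequently extraction of the $i$-th component commutes with division by this scalar, yielding
\begin{equation*}
  X(t)_i=\frac{z(t)_i}{(1-t^a)(1-t^b)},\qquad z(t)_i=\sum_{j=0}^h (z_j)_i\,t^j.
\end{equation*}
I expect there to be no genuine obstacle in this final theorem: all the analytic content—expressing $\textbf{X}(t)$ as a Coxeter-orbit series, summing the geometric contributions, and most crucially the factorization $\sum_{i=0}^{g+1}(c_i-c_{i-1})t^{2i}=(1-t^a)(1-t^b)$—has already been carried out in Proposition \ref{SL2}. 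The present statement simply records that, since the denominator is group-theoretic data independent of the vertex $i$, the per-module Poincar\'e series is obtained by reading off the $i$-th entry of the numerator polynomial $z(t)$, uniformly across the simply laced, untwisted multiply laced, and twisted cases.
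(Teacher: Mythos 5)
Your proposal is correct and matches the paper's own treatment: the paper gives no separate proof of this theorem, deriving it exactly as you do by reading off the $i$-th component of the vector identity $X(t)=z(t)/((1-t^a)(1-t^b))$ from Proposition \ref{SL2}, using that the denominator is scalar while the numerator is valued in the root lattice, so that the coefficient of $\alpha_i$ in $\textbf{x}_k$ is the multiplicity of the corresponding module in $S^k(V)$. Nothing further is needed.
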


Proposition \ref{prop} leads to the following fact about the  coefficients of $z(t)_i$ for the
Poincar\'{e} series $X(t)_i$.

\begin{prop}
The sum of the coefficients of $z(t)_i$ is $2d_i$, where $d_i$ is the coefficient of the simple root $\alpha_i$ in
$\alpha_0+\psi$.  Furthermore, the coefficient
of $t^{g+k}$ is equal to the coefficient of $t^{g-k}$ for $k=0,\dots, g$ and it vanishes if $k=0$ when
$i\neq i_{\star}$.
\end{prop}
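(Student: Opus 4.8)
The goal is to prove two facts about the polynomial $z(t)_i=\sum_{k=0}^h z_{k,i}t^k$, where $z_{k,i}$ denotes the coefficient of $\alpha_i$ in $z_k=\mathcal C^{(k)}\alpha_0-\mathcal C^{(k-1)}\alpha_0$: first, that $\sum_{k=0}^h z_{k,i}=2d_i$, and second, the palindromic symmetry $z_{g+k,i}=z_{g-k,i}$ for $k=0,\dots,g$, together with the vanishing of the central coefficient $z_{g,i}$ when $i\neq i_\star$.

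The plan is to exploit telescoping. Since $z_k=\mathcal C^{(k)}\alpha_0-\mathcal C^{(k-1)}\alpha_0$, the total sum collapses:
\begin{equation*}
\sum_{k=0}^h z_k=\mathcal C^{(h)}\alpha_0-\mathcal C^{(-1)}\alpha_0.
\end{equation*}
I would use $\mathcal C^{(h)}\alpha_0=\alpha_0+2\psi$ (established in the derivation leading to \eqref{c^{(h-1)}}) and note that $\mathcal C^{(0)}\alpha_0=\alpha_0$ while the $k=0$ term contributes $\mathcal C^{(0)}\alpha_0-\mathcal C^{(-1)}\alpha_0$; choosing the convention $\mathcal C^{(-1)}\alpha_0=\mathcal C^{(0)}\alpha_0=\alpha_0$ makes the telescoped sum equal $\mathcal C^{(h)}\alpha_0-\alpha_0=2\psi$. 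Reading off the coefficient of $\alpha_i$ and recalling $\alpha_0+\psi=\sum_{i=0}^l d_i\alpha_i$ (so the $\alpha_i$-coefficient of $\psi$ is $d_i$ for $i\neq 0$, and one checks the $\alpha_0$ case separately via $d_0=1$), gives $\sum_{k=0}^h z_{k,i}=2d_i$. This is the clean part.

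For the symmetry, I would invoke Proposition \ref{prop}, which already supplies $z_k=z_{h-k}$ as vectors in $\tilde{\mathfrak h}'$. Taking the $\alpha_i$-component of this identity immediately yields $z_{k,i}=z_{h-k,i}$ for all $k$; substituting $k\mapsto g-k$ (using $h=2g$) turns this into the stated $z_{g+k,i}=z_{g-k,i}$. For the central vanishing, Proposition \ref{prop} identifies $z_g=2\alpha_{i_\star}$, so the $\alpha_i$-component $z_{g,i}=2\delta_{i,i_\star}$ vanishes precisely when $i\neq i_\star$, which is exactly the claimed $k=0$ statement. Thus both halves of the proposition reduce to extracting components of the already-proven vector identities in Proposition \ref{prop} together with the telescoping sum.

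The main obstacle I anticipate is bookkeeping around the boundary conventions: pinning down $\mathcal C^{(-1)}\alpha_0$ and the $k=0$ term of $z(t)$ so that the telescoping produces exactly $2\psi$ rather than $2\psi$ plus a stray correction, and then handling the $\alpha_0$-component ($i=0$) consistently, since $\psi$ lives in $\mathfrak h'$ and has no $\alpha_0$ part while $\alpha_0+\psi$ does. I would resolve this by working throughout with $\alpha_0+\psi$, which is $\widetilde W$-fixed, writing $z_k=\mathcal C^{(k)}(\alpha_0+\psi)-\mathcal C^{(k-1)}(\alpha_0+\psi)-\big(\mathcal C^{(k)}\psi-\mathcal C^{(k-1)}\psi\big)=-(\mathcal C^{(k)}\psi-\mathcal C^{(k-1)}\psi)$, so that the telescoped sum is $-(\mathcal C^{(h)}\psi-\mathcal C^{(0)}\psi)=-(-\psi-\psi)=2\psi$ cleanly, with the $\alpha_i$-coefficient equal to $2d_i$ for all $i$, absorbing the $i=0$ case via $d_0=1$ and $\mathcal C^{(h)}\alpha_0=\alpha_0+2\psi$. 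The remaining assertions are then direct component readings of Proposition \ref{prop}.
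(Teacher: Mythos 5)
Your overall strategy --- telescoping for the coefficient sum, then reading off the $\alpha_i$-components of $z_k=z_{h-k}$ and $z_g=2\alpha_{i_\star}$ from Proposition \ref{prop} for the symmetry and the central vanishing --- is the intended one (the paper gives no written proof beyond ``Proposition \ref{prop} leads to\dots''), and your second half is correct as stated. The gap is in the first half, and it is exactly the boundary issue you flagged but did not actually resolve. With your convention $\mathcal C^{(-1)}\alpha_0=\alpha_0$ you get $z_0=0$, and at the other end $z_h=\mathcal C^{(h)}\alpha_0-\mathcal C^{(h-1)}\alpha_0=(\alpha_0+2\psi)-(\alpha_0+2\psi)=0$, so the telescoped total is $2\psi$; your proposed repair via $\alpha_0+\psi$ yields the same $2\psi$. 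But the $\alpha_0$-component of $2\psi$ is $0$, not $2d_0=2$, so your argument proves the claim only for $i\neq 0$ and actually contradicts it for $i=0$. ``Absorbing the $i=0$ case via $d_0=1$'' is not an argument --- the number your telescoping produces there is simply wrong.

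The correct boundary values are $z_0=z_h=\alpha_0$, not $0$. This is forced in several ways: (i) $\textbf{x}_0=\alpha_0$ and $X(0)=z_0$; (ii) the invariant series $X(t)_0=(1+t^h)/\bigl((1-t^a)(1-t^b)\bigr)$ requires $z(t)_0=1+t^h$; (iii) in the derivation of the rational expression for $\textbf{X}(t)$ the $t^h$-coefficient of the numerator is $2(\alpha_0+\psi)-\mathcal C^{(h-1)}\alpha_0=\alpha_0$, i.e.\ the displayed formula $z_h=\mathcal C^{(h)}\alpha_0-\mathcal C^{(h-1)}\alpha_0$ is an abuse of notation at $j=h$, just as $z_0=\mathcal C^{(0)}\alpha_0-\mathcal C^{(-1)}\alpha_0$ is at $j=0$. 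With $z_0=z_h=\alpha_0$ and the genuine telescoping $\sum_{k=1}^{h-1}z_k=\mathcal C^{(h-1)}\alpha_0-\mathcal C^{(0)}\alpha_0=2\psi$, the total becomes $2(\alpha_0+\psi)$, whose $\alpha_i$-coefficient is $2d_i$ for every $i$, including $i=0$. Once you make that correction, the rest of your argument (componentwise reading of $z_k=z_{h-k}$ and of $z_g=2\alpha_{i_\star}$) goes through unchanged.
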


\qquad
\vskip30pt \centerline{\bf ACKNOWLEDGMENT}

The research is supported in part by
NSFC grant nos. 12171303, 12101231, 12271332 and
Huzhou University grant no. 2021XJKY03.

\end{document}